\numberwithin{equation}{section}
\newtheorem{theorem}[equation]{Theorem}
\newtheorem*{theorem*}{Theorem}
\newtheorem{lemma}[equation]{Lemma}
\newtheorem*{conjecture*}{Mamma Conjecture}
\newtheorem*{conjecture1*}{Mamma Conjecture (revisited)}
\newtheorem{proposition}[equation]{Proposition}
\newtheorem{corollary}[equation]{Corollary}
\newtheorem*{corollary*}{Corollary}
\theoremstyle{remark}
\newtheorem{definition}[equation]{Definition}
\newtheorem{example}[equation]{Example}
\newtheorem{notation}[equation]{Notation}
\theoremstyle{remark}
\newtheorem{remark}[equation]{Remark}
\newcommand{\cA}{{\mathcal A}}
\newcommand{\cB}{{\mathcal B}}
\newcommand{\cC}{{\mathcal C}}
\newcommand{\cD}{{\mathcal D}}
\newcommand{\cE}{{\mathcal E}}
\newcommand{\cO}{{\mathcal O}}
\newcommand{\cS}{{\mathcal S}}
\newcommand{\bbC}{\mathbb{C}}
\newcommand{\bbP}{\mathbb{P}}
\newcommand{\bbQ}{\mathbb{Q}}
\newcommand{\bbZ}{\mathbb{Z}}
\DeclareMathOperator{\SmProj}{SmProj} 
\DeclareMathOperator{\id}{id}
\DeclareMathOperator{\dgcat}{dgcat} 
\DeclareMathOperator{\Fun}{Fun} 
\newcommand{\perf}{\mathrm{perf}}
\newcommand{\Chow}{\mathsf{Chow}}
\newcommand{\dg}{\mathsf{dg}}
\newcommand{\Hom}{\mathrm{Hom}}
\newcommand{\End}{\mathrm{End}}
\newcommand{\KM}{\mathsf{KM}}
\newcommand{\Hmo}{\mathsf{Hmo}}
\newcommand{\op}{\mathrm{op}}
\newcommand{\too}{\longrightarrow}
\newcommand{\add}{\mathsf{add}}
\newcommand{\ie}{\textsl{i.e.}\ }
\newcommand{\eg}{\textsl{e.g.}}
\begin{document}

\title[Relations between the Chow and the NC motive of a variety]{Relations between the Chow motive \\and the noncommutative motive \\of a smooth projective variety}
\author{Marcello Bernardara and Gon{\c c}alo~Tabuada}

\address{Institut de Math\'ematiques de Toulouse \\ %
Universit\'e Paul Sabatier \\ %
118 route de Narbonne \\ %
31062 Toulouse Cedex 9\\ %
France}
\email{marcello.bernardara@math.univ-toulouse.fr} 
\urladdr{http://www.math.univ-toulouse.fr/~mbernard/}

\address{Gon{\c c}alo Tabuada, Department of Mathematics, MIT, Cambridge, MA 02139, USA}
\email{tabuada@math.mit.edu}
\urladdr{http://math.mit.edu/~tabuada}
\thanks{G. Tabuada was partially supported by a  NSF CAREER Award.}

\subjclass[2000]{11E04, 13D09, 14A22, 14C15, 19C30, 14K05}
\date{\today}

\keywords{Chow motives, noncommutative motives, quadratic forms, full exceptional collections, Severi-Brauer varieties, abelian varieties}

\abstract{In this note we relate the notions of Lefschetz type, decomposability, and isomorphism, on Chow motives with the notions of unit type, decomposability, and isomorphism, on noncommutative motives. Examples, counter-examples, and applications are also described.}}

\maketitle
\vskip-\baselineskip
\vskip-\baselineskip



\section{Introduction}
Let $k$ be a base field and $R$ a commutative ring of coefficients.
\subsection*{Chow motives}
In the early sixties Grothendieck envisioned the existence of a ``universal'' cohomology theory of schemes. Among several conjectures and developments, a contravariant $\otimes$-functor
$$ M(-)_R: \SmProj(k)^\op \too \Chow(k)_R$$
from smooth projective $k$-schemes to {\em Chow motives} (with $R$ coefficients) was constructed. Intuitively speaking, $\Chow(k)_R$ encodes all the geometric/arithmetic information about smooth projective $k$-schemes and acts as a gateway between algebraic geometry and the assortment of the numerous Weil cohomology theories such as de Rham, Betti, $l$-adic, crystalline, etc; see~\cite{Andre,Jannsen,Manin}.
\subsection*{Noncommutative motives}
A {\em differential graded (=dg) category} $\cA$ is a category enriched over complexes of $k$-vector spaces; see \S\ref{sub:dg}. Every (dg) $k$-algebra $A$ gives naturally rise to a dg category $\underline{A}$ with a single object and (dg) $k$-algebra of endomorphisms $A$. Another source of examples is provided by $k$-schemes since the category of perfect complexes $\perf(X)$ of every smooth projective $k$-scheme $X$ admits a unique dg enhancement $\perf_\dg(X)$; see \cite{LO}. All the classical invariants such as algebraic $K$-theory, cyclic homology, and topological Hochschild homology, extend naturally from $k$-algebras (and from $k$-schemes) to dg categories. In order to study all these invariants simultaneously the notion of additive invariant was introduced in \cite{IMRN}. Roughly speaking, a functor $E:\dgcat(k)\to \mathsf{D}$ from the category of dg categories towards an additive category is called {\em additive} if it inverts Morita equivalences and sends semi-orthogonal decompositions to direct 
sums. Thanks to the work \cite{BM,Exact,Exact2,Negative,Fundamental, MacLane,TT,Wald}, all the above mentioned invariants are additive. In \cite{IMRN} the universal 
additive invariant was also constructed
\begin{equation}\label{eq:universal}
U(-)_R:\dgcat(k) \too \Hmo_0(k)_R\,.
\end{equation}
Given any $R$-linear additive category $\mathsf{D}$, there is an induced equivalence of categories
\begin{equation}\label{eq:categories}
U(-)_R^\ast: \Fun_{\add}(\Hmo_0(k)_R,\mathsf{D}) \stackrel{\sim}{\too} \Fun_{\mathsf{A}}(\dgcat(k),\mathsf{D})\,,
\end{equation}
where the left-hand side denotes the category of additive functors and the right-hand side the category of additive invariants. Because of this universal property, which is reminiscent from motives, $\Hmo_0(k)_R$ is called the category of {\em noncommutative motives}. The tensor product of $k$-algebras extends also naturally to dg categories giving rise to a symmetric monoidal structure $-\otimes-$ on $\dgcat(k)$ which descends to $\Hmo_0(k)_R$ making the above functor \eqref{eq:universal} symmetric monoidal.
\subsection*{Motivating questions}
Let $X$ be a smooth projective $k$-scheme. In order to study it we can proceed in two distinct directions. On one direction we can associate to $X$ its Chow motive $M(X)_R$. On another direction we can associate to $X$ the noncommutative motive $U(\perf_\dg(X))_R$. Note that while $M(X)_R$ encodes all the information about the numerous Weil cohomology theories of $X$, $U(\perf_\dg(X))_R$ encodes all the information about the different additive invariants of $\perf_\dg(X)$.

Let ${\bf L}\in \Chow(k)_R$ be the Lefschetz motive and ${\bf 1}:=U(\underline{k})_R$ the $\otimes$-unit of $\Hmo_0(k)_R$. Following \cite{GO}, a Chow motive is called of {\em Lefschetz $R$-type} if it is isomorphic to
${\bf L}^{\otimes l_1} \oplus \cdots \oplus {\bf L}^{\otimes l_m}$ for some choice of non-negative integers $l_1, \ldots, l_m$. In the same vein, a noncommutative motive in $\Hmo_0(k)_R$ is called of {\em unit $R$-type} if it is isomorphic to $\oplus_{i=1}^m{\bf 1}$ for a certain non-negative integer $m$. The following implication was proved in \cite[\S4]{GO} (assuming that $\bbZ \subseteq R$): 
\begin{equation}\label{eq:implication-1}
M(X)_R\,\,\mathrm{Lefschtez}\,\,R\text{-}\mathrm{type} \Rightarrow U(\perf_\dg(X))_R\,\, \mathrm{unit} \,\,R\text{-}\mathrm{type}\,.
\end{equation}
In the particular case where $R=\bbQ$, this implication becomes an equivalence
\begin{equation}\label{eq:equivalence}
M(X)_\bbQ\,\,\mathrm{Lefschetz}\,\,\bbQ\text{-}\mathrm{type} \Leftrightarrow U(\perf_\dg(X))_\bbQ\,\,\mathrm{unit}\,\,\bbQ\text{-}\mathrm{type}\,;
\end{equation}
see \cite[\S1]{MT}. Hence, it is natural to ask the following:

\smallbreak

{\it Question A: Does the above implication \eqref{eq:implication-1} admits a partial converse ?}

\smallbreak

Recall that an object in an additive category is called indecomposable if its only non-trivial idempotent endomorphisms are $\pm$ the identity; otherwise it is called decomposable. Our second motivating question is the following:

\smallbreak

{\it Question B: What is the relation between the (in)decomposability of $M(X)_R$ and the (in)decomposability of $U(\perf_\dg(X))_R$ ?}

\smallbreak

Let $X$ and $Y$ be smooth projective $k$-schemes. Another motivating question is:

\smallbreak

{\it Question C: Does the following implication holds
\begin{equation}\label{eq:implication-new}
M(X)_R \simeq M(Y)_R \Rightarrow U(\perf_\dg(X))_R \simeq U(\perf_\dg(Y))_R\,\,\, ?
\end{equation}
How about its converse ?
}

\smallbreak

In this note we provide precise answers to these questions; consult Corollaries \ref{cor:main} and \ref{cor:main2} for applications.
\section{Statements of results}
Our first main result, which answers Question A, is the following:
\begin{theorem}\label{thm:main1}
Let $X$ be an irreducible smooth projective $k$-scheme of dimension $d$. Assume that $\bbZ\subseteq R$ and that every finitely generated projective $R[1/(2d)!]$-module is free (\eg\ $R$ a principal ideal domain). Assume also that $U(\perf_\dg(X))_R \simeq \oplus_{i=1}^m{\bf 1}$ for a certain non-negative integer $m$. Under these assumptions, there is a choice of integers (up to permutation) $l_1, \ldots, l_m \in \{0, \ldots, d\}$ giving rise to an isomorphism
$$ M(X)_{R[1/(2d)!]} \simeq {\bf L}^{\otimes l_1} \oplus \cdots \oplus {\bf L}^{\otimes l_m}\,.$$
\end{theorem}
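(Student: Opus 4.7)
The plan is to first reduce to rational coefficients---where \eqref{eq:equivalence} already gives us a Lefschetz $\bbQ$-type decomposition---then identify the correct multiset of exponents, and finally lift the rational decomposition to $R[1/(2d)!]$-coefficients. The last step is the main obstacle.

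First I would use that $\bbZ\subseteq R$ to base-change the hypothesis to $\bbQ$, obtaining $U(\perf_\dg(X))_\bbQ \simeq \oplus_{i=1}^m \mathbf{1}$. Applying \eqref{eq:equivalence} from \cite{MT} then gives $M(X)_\bbQ \simeq \mathbf{L}^{\otimes l_1}\oplus\cdots\oplus \mathbf{L}^{\otimes l_{m'}}$ for some non-negative $l_j$. Each summand $\mathbf{L}^{\otimes l}$ produces a non-zero element of $\Hom_{\Chow(k)_\bbQ}(M(X),\mathbf{L}^{\otimes l})=CH^l(X)_\bbQ$ and a non-zero element of $\Hom_{\Chow(k)_\bbQ}(\mathbf{L}^{\otimes l},M(X))=CH^{d-l}(X)_\bbQ$, forcing $l_j\in\{0,\ldots,d\}$. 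Applying a faithful $\bbQ$-linear additive invariant (for instance $K_0(-)_\bbQ$, which factors through $\Hmo_0(k)_\bbQ$) to both descriptions will match ranks and force $m'=m$.

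Next, to produce the analogous decomposition with $R[1/(2d)!]$-coefficients, the plan is to construct orthogonal idempotents $\pi_j \in CH^d(X\times X)_{R[1/(2d)!]}$ projecting onto each Tate twist $\mathbf{L}^{\otimes l_j}$. The two ingredients I would use are: (a) from $U(\perf_\dg(X))_R \simeq \oplus_{i=1}^m \mathbf{1}$ extract $K_0(X)_R\cong R^m$, and after inverting $(2d)!$ split $K_0(X)_{R[1/(2d)!]}$ into Adams weight eigenspaces via the eigenspace projectors (polynomials in the $\psi^p$ with denominators dividing $(2d)!$); the freeness hypothesis then promotes each weight-$i$ piece to a free $R[1/(2d)!]$-module of some rank $m_i$, which equals $\#\{j:l_j=i\}$ by comparison with the rational case. (b) The Poincar\'e pairing on $X\times X$, perfect once $(2d)!$ is inverted, lets one pair each such free basis in $CH^{l_j}(X)_{R[1/(2d)!]}$ with its Poincar\'e dual in $CH^{d-l_j}(X)_{R[1/(2d)!]}$ to produce K\"unneth-style projectors $\pi_j\in CH^d(X\times X)_{R[1/(2d)!]}$.

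The hard part will be step (b): verifying that the idempotents built in this fashion are genuinely orthogonal and recover the rational projectors after base change to $\bbQ$. This requires carefully matching the dual bases with the rational decomposition so that cross-terms vanish, not merely a free basis and a perfect pairing. The bound $(2d)!$ is essentially sharp here, as it simultaneously accommodates the Adams eigenspace projector denominators (bounded by $d!$) and the denominators introduced by the Chern character and Poincar\'e duality on the $2d$-dimensional product $X\times X$.
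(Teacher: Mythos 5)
Your strategy is genuinely different from the paper's, and it has a real gap at the step you yourself flag as ``the hard part.'' The paper never tries to build K\"unneth projectors $\pi_j \in CH^d(X\times X)_{R[1/(2d)!]}$ for $M(X)$. Instead it transfers the hypothesis $U(\perf_\dg(X))_R \simeq \oplus^m {\bf 1}$ through the fully faithful embedding $\theta$ of $K_0$-motives into noncommutative motives (diagram \eqref{eq:diagramK0}) to get $M_0(X)_R \simeq \oplus^m M_0(\mathrm{spec}(k))_R$ in $\KM(k)_R$; then applies a Grothendieck--Riemann--Roch functor $\Psi$ into the orbit category $\Chow(k)_{R[1/(2d)!]}/_{\!-\otimes R(1)}$, whose denominators (from the Chern character and the Todd class on a $(d_r+d_s)$-dimensional product) are shown in Proposition~\ref{prop:aux-key3} and Lemma~\ref{lem:aux-arithmetic} to be controlled by $(2d)!$; and finally unpacks the resulting orbit-category isomorphism into honest morphisms $\alpha, \beta$ in $\Chow(k)_{R[1/(2d)!]}$ with $\beta\alpha = \id$. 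This realizes $M(X)_{R[1/(2d)!]}$ as a direct factor of $\oplus_{l=0}^d\oplus_{i=1}^m {\bf L}^{\otimes l}$, whose endomorphism ring is just a product of matrix rings $M_{m\times m}(R[1/(2d)!])$ by \eqref{eq:Kronecker}. The freeness hypothesis on finitely generated projective $R[1/(2d)!]$-modules then pins down the direct factors as subsums, and a counting argument in the orbit category fixes the cardinality at $m$. All the delicate idempotent-manipulation happens inside a Lefschetz-type object where it is trivial matrix algebra.

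Your step (b) is where the argument would founder. You want to construct orthogonal idempotents $\pi_j$ using a free basis of $CH^*(X)_{R[1/(2d)!]}$ and the Poincar\'e pairing, but perfectness of the intersection pairing $CH^i(X)_{R[1/(2d)!]}\times CH^{d-i}(X)_{R[1/(2d)!]}\to R[1/(2d)!]$ is \emph{not} automatic once you leave $\bbQ$-coefficients: freeness of each $CH^i$ plus rational perfectness does not give perfectness over $R[1/(2d)!]$ (the Gram determinant could be a non-unit). The hypothesis does give you something --- namely that $K_0(X\times X)_R$ is a full $m\times m$ matrix algebra over $R$ via $\theta$, which is a perfectness statement for the ungraded Euler form on $K_0(X)_R$ --- but extracting the graded K\"unneth projectors from this requires exactly the kind of splitting you'd be trying to establish. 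The Adams-eigenspace claim is also shaky: the projectors onto $\psi^p$-eigenspaces have denominators like $\prod_{i\neq j}(p^j-p^i)$, which for a fixed prime $p$ are not bounded by $(2d)!$; what actually controls the denominators in Proposition~\ref{prop:key-Soule} is Soul\'e's torsion estimate via the Gersten--Quillen spectral sequence together with the Chern character, not naive eigenspace projectors. In short: your reduction to $\bbQ$ and the count $m'=m$ are fine, but the lift to $R[1/(2d)!]$ via K\"unneth projectors needs an input (perfect integral Poincar\'e duality for $X$) that is not available, and the paper's orbit-category/direct-factor route is precisely designed to avoid having to prove it.
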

Intuitively speaking, Theorem~\ref{thm:main1} shows that the converse of the above implication \eqref{eq:implication-1} holds as soon as one inverts the integer $(2d)!$ (or equivalently its prime factors). By combining this result with \eqref{eq:implication-1}, one obtains a refinement of \eqref{eq:equivalence}:
\begin{corollary}\label{cor:main1}
Given $X$ and $R$ as in Theorem~\ref{thm:main1}, we have the equivalence
\begin{equation*}
M(X)_{R[1/(2d)!]}\,\,\mathrm{Lefschetz}\text{-}\mathrm{type} \Leftrightarrow U(\perf_\dg(X))_{R[1/(2d)!]}\,\,\mathrm{unit}\text{-}\mathrm{type}\,.
\end{equation*}
\end{corollary}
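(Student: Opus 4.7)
The plan is to derive the corollary as a straightforward combination of Theorem~\ref{thm:main1} with the previously known implication~\eqref{eq:implication-1}. The key observation is that the ring $R' := R[1/(2d)!]$ itself satisfies the hypotheses of Theorem~\ref{thm:main1}, so we can feed the corollary's assumption back into the theorem without any loss.

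For the direction ($\Leftarrow$), suppose $U(\perf_\dg(X))_{R'}\simeq \oplus_{i=1}^m \mathbf{1}$ for some $m\geq 0$. I would apply Theorem~\ref{thm:main1} with the coefficient ring taken to be $R'$ in place of $R$. The required hypotheses are easy to check: $\bbZ\subseteq R'$ holds because $\bbZ\subseteq R$; and since $R'[1/(2d)!]=R'$, the assumption on free modules for $R'$ reduces to the same assumption for $R$, which holds by hypothesis. The theorem then produces integers $l_1,\ldots,l_m\in\{0,\ldots,d\}$ and an isomorphism
\[
M(X)_{R'[1/(2d)!]} \simeq \mathbf{L}^{\otimes l_1}\oplus\cdots\oplus\mathbf{L}^{\otimes l_m}\,,
\]
and since $R'[1/(2d)!]=R'=R[1/(2d)!]$ this is exactly the Lefschetz-type condition.

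For the direction ($\Rightarrow$), I would simply invoke the implication~\eqref{eq:implication-1} proved in \cite[\S4]{GO}, with $R$ replaced by $R'$. The only hypothesis required there is $\bbZ\subseteq R'$, which is immediate.

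There is essentially no obstacle here, since both ingredients are already in place; the only point worth stating carefully is the tautology $R'[1/(2d)!]=R'$ that lets Theorem~\ref{thm:main1} apply to $R'$ without re-inverting anything.
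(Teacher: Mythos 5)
Your proposal is correct and matches the paper's (unwritten) proof exactly: the paper introduces Corollary~\ref{cor:main1} simply with ``By combining this result with \eqref{eq:implication-1}, one obtains...''. Your explicit check that $R' := R[1/(2d)!]$ inherits the hypotheses of Theorem~\ref{thm:main1}, using the tautology $R'[1/(2d)!]=R'$, is precisely the detail the paper leaves implicit, and it is handled correctly.
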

In the particular case where $X$ is a curve $C$ and $R=\bbZ$, Corollary~\ref{cor:main1} reduces to 
\begin{equation*}
M(C)_{\bbZ[1/2]}\,\,\mathrm{Lefschetz}\,\,\bbZ[1/2]\text{-}\mathrm{type} \Leftrightarrow U(\perf_\dg(C))_{\bbZ[1/2]}\,\,\mathrm{unit}\,\,\bbZ[1/2]\text{-}\mathrm{type}\,.
\end{equation*}
Moreover, since the prime factors of $4!$ are $\{2,3\}$, one has 
\begin{equation*}
M(S)_{\bbZ[1/6]}\,\,\mathrm{Lefschetz}\,\,\bbZ[1/6]\text{-}\mathrm{type} \Leftrightarrow U(\perf_\dg(S))_{\bbZ[1/6]}\,\,\mathrm{unit}\,\,\bbZ[1/6]\text{-}\mathrm{type}
\end{equation*}
for every surface $S$. As the following proposition shows, the (strict) converse of implication \eqref{eq:implication-1} is false !
\begin{proposition}{(see \S\ref{sec:quadric})}\label{prop:counter-example-quadric}
Let $q$ be a non-singular quadratic form and $Q_q$ the associated smooth projective quadric. Assume that $q$ is even dimensional, anisotropic, and with trivial discriminant and trivial Clifford invariant. Under these assumptions, $M(Q_q)_\bbZ$ is {\em not} of Lefschetz $\bbZ$-type
while $U(\perf_\dg(Q_q))_\bbZ$ is of unit $\bbZ$-type (and hence of unit $R$-type for every commutative ring $R$).
\end{proposition}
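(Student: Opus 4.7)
The plan is to establish the two claims separately: the unit-type statement on the noncommutative side comes from a Kapranov-type semi-orthogonal decomposition combined with the hypotheses on the classical invariants of $q$, while the failure of Lefschetz type on the Chow side comes from Springer's theorem on odd-degree extensions.

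\textbf{Unit type on the noncommutative side.} The key input is the Kapranov/Kuznetsov semi-orthogonal decomposition
\[
\perf(Q_q) = \langle \perf(C_0(q)), \cO_{Q_q}, \cO_{Q_q}(1), \ldots, \cO_{Q_q}(d-1) \rangle,
\]
where $d = \dim Q_q$ and $C_0(q)$ denotes the even Clifford algebra of $q$. The hypothesis that $q$ has trivial discriminant splits the center of $C_0(q)$ as $k\times k$ and produces a product decomposition $C_0(q) \simeq C^+ \times C^-$ of central simple $k$-algebras; the Clifford invariant is the common Brauer class $[C^+] = [C^-]$, which by hypothesis is trivial. Hence $C^\pm$ are Morita equivalent to $k$, so $\perf(C_0(q))$ admits the further semi-orthogonal decomposition $\langle \perf(k), \perf(k)\rangle$, and altogether $\perf(Q_q)$ decomposes into $d+2$ pieces each Morita equivalent to $\perf(k)$. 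Passing to dg enhancements and applying the universal additive invariant $U(-)_\bbZ$, which by construction sends semi-orthogonal decompositions to direct sums and Morita equivalences to isomorphisms, yields
\[
U(\perf_\dg(Q_q))_\bbZ \simeq \bigoplus_{i=1}^{d+2} {\bf 1},
\]
which is of unit $\bbZ$-type.

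\textbf{Not Lefschetz type on the Chow side.} Suppose for contradiction that $M(Q_q)_\bbZ \simeq \bigoplus_{i=1}^{m}{\bf L}^{\otimes l_i}$ with $l_i\in\{0,\ldots,d\}$. After base change to an algebraic closure $\bar k$, this yields a direct-sum decomposition of $M(Q_{\bar q})_\bbZ$; comparing with the classical decomposition
\[
M(Q_{\bar q})_\bbZ \simeq \bigoplus_{i=0}^{d}{\bf L}^{\otimes i}\oplus {\bf L}^{\otimes d/2}
\]
of a split even-dimensional quadric, the multiset $\{l_1,\ldots,l_m\}$ must equal $\{0,1,\ldots,d,d/2\}$. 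In particular ${\bf L}^{\otimes 0}$ appears as a direct summand of $M(Q_q)_\bbZ$, which classically produces a zero-cycle on $Q_q$ of degree $1$. However, since $q$ is anisotropic, Springer's theorem forces every zero-cycle on $Q_q$ to have even degree, a contradiction.

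\textbf{Main obstacle.} The most delicate ingredient is the ``non-split'' version of Kapranov's decomposition over a field where $Q_q$ has no rational point, together with the precise identification of the endomorphism algebras of the spinor bundles with the components $C^{\pm}$ of $C_0(q)$; the literature presents this with various normalizations and requires some care. Once this input is in hand, everything else reduces to well-known facts about Chow motives of split quadrics and to the defining universal property of $U(-)_\bbZ$.
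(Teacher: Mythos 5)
Your argument on the noncommutative side is the same as the paper's: both invoke the Kuznetsov/Auel--Bernardara--Bolognesi semi-orthogonal decomposition $\perf(Q_q)=\langle\perf(\cC_0(q)),\cO,\ldots,\cO(d-1)\rangle$ (the twists are normalized differently, which is immaterial), then use the trivial discriminant and Clifford invariant hypotheses to identify $\cC_0(q)$ with a product of matrix algebras (the chart in Lam's book, or equivalently your $C^+\times C^-$ description), and finally apply the fact that $U(-)_\bbZ$ turns semi-orthogonal decompositions into direct sums and inverts Morita equivalences.

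On the Chow side your route is genuinely different from the paper's. The paper simply cites Rost's unpublished note on the motive of a Pfister form together with \cite[\S XVII]{elman-karpenko-merku}, which in fact give finer information (an explicit indecomposable ``Rost'' direct factor of $M(Q_q)_\bbZ$). You instead give a short self-contained contradiction: a Lefschetz-type decomposition of $M(Q_q)_\bbZ$ must, after base change to $\bar k$ and comparison with the known decomposition of a split even-dimensional quadric, contain ${\bf L}^{\otimes 0}$ as a summand; a splitting of ${\bf L}^{\otimes 0}$ off $M(Q_q)_\bbZ$ is, by the correspondence description of $\Hom({\bf L}^{\otimes 0},M(Q_q))$ and $\Hom(M(Q_q),{\bf L}^{\otimes 0})$, exactly a degree-one zero-cycle on $Q_q$; and Springer's theorem (anisotropic forms stay anisotropic under odd-degree extensions, so every closed point of $Q_q$ has even degree) rules this out. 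This is a perfectly valid and arguably more elementary argument for the precise statement needed; the cost is that it only excludes Lefschetz type, whereas the Rost-motive reference describes the actual motivic decomposition. One small point worth stating explicitly if you write this up: the ``multiset must equal'' step does not need Krull--Schmidt — the multiplicity of each ${\bf L}^{\otimes i}$ in a Lefschetz-type sum is read off from $\Hom_{\Chow(\bar k)_\bbZ}({\bf L}^{\otimes i},-)$ since $\Hom({\bf L}^{\otimes p},{\bf L}^{\otimes q})=\delta_{pq}\bbZ$.
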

Proposition \ref{prop:counter-example-quadric} applies to all $3$-fold Pfister forms and to all elements of the third power of the fundamental ideal $I(k)$ of the Witt ring $W(k)$; see Example~\ref{ex:3-fold}. 

As an application of Theorem~\ref{thm:main1}, we obtain the following sharpening of the main result of \cite{MT}; recall from {\em loc. cit.} that the isomorphism \eqref{eq:canonical-iso} below was obtained only with rational coefficients.
\begin{corollary}\label{cor:main}
Let $X$ be an irreducible smooth projective $k$-scheme of dimension $d$. Assume that $\perf(X)$ admits a full exceptional collection $(\cE_1,\ldots, \cE_m)$ of length $m$. Under these assumptions, there is a choice of integers (up to permutation) $l_1, \ldots, l_m \in \{0, \ldots,d\}$ giving rise to an isomorphism 
\begin{equation}\label{eq:canonical-iso}
M(X)_{\bbZ[1/(2d)!]} \simeq {\bf L}^{\otimes l_1} \oplus \cdots \oplus {\bf L}^{\otimes l_m}\,.
\end{equation}
\end{corollary}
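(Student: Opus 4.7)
The plan is to deduce Corollary~\ref{cor:main} directly from Theorem~\ref{thm:main1}, using the fact that a full exceptional collection yields the unit-type hypothesis automatically. The only input one needs to unlock is that a full exceptional collection of length $m$ furnishes a semi-orthogonal decomposition of the dg enhancement $\perf_\dg(X)$ into $m$ copies of the dg category $\underline{k}$.

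Concretely, first I would recall that each exceptional object $\cE_i$ generates an admissible triangulated subcategory $\langle\cE_i\rangle \subseteq \perf(X)$ which, being equivalent to $\perf(k)$, lifts to a dg enhancement Morita equivalent to $\underline{k}$. Thus the full exceptional collection $(\cE_1,\ldots,\cE_m)$ promotes to a semi-orthogonal decomposition
\[
\perf_\dg(X) \;=\; \langle\, \underline{k},\,\underline{k},\,\ldots,\,\underline{k} \,\rangle \quad (m\text{ factors}).
\]
Since the universal additive invariant $U(-)_R$ sends semi-orthogonal decompositions to direct sums (this is precisely the defining additivity property recalled in the Introduction), we obtain
\[
U(\perf_\dg(X))_R \;\simeq\; \bigoplus_{i=1}^m U(\underline{k})_R \;=\; \bigoplus_{i=1}^m \mathbf{1}
\]
for every commutative ring $R$. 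In particular, taking $R=\bbZ$, the noncommutative motive $U(\perf_\dg(X))_\bbZ$ is of unit $\bbZ$-type, and the integer $m$ coincides with the length of the exceptional collection.

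At this stage I would simply invoke Theorem~\ref{thm:main1} with $R=\bbZ$: the ring $\bbZ$ is a principal ideal domain, so the hypothesis on finitely generated projective modules is trivially satisfied, and the hypothesis $U(\perf_\dg(X))_\bbZ \simeq \bigoplus_{i=1}^m \mathbf{1}$ has just been verified. The theorem then provides integers $l_1,\ldots,l_m \in \{0,\ldots,d\}$ (unique up to permutation) together with an isomorphism
\[
M(X)_{\bbZ[1/(2d)!]} \;\simeq\; \bL^{\otimes l_1} \oplus \cdots \oplus \bL^{\otimes l_m},
\]
which is precisely~\eqref{eq:canonical-iso}. The proof is thus a clean reduction, and there is no real obstacle: all the heavy lifting has already been carried out in Theorem~\ref{thm:main1}, and the only nontrivial input from the hypothesis on $X$ is the elementary translation of a full exceptional collection into a semi-orthogonal decomposition of $\perf_\dg(X)$ by copies of $\underline{k}$.
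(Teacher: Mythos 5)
Your proof is correct and follows exactly the route the paper intends: a full exceptional collection of length $m$ gives a semi-orthogonal decomposition of $\perf_\dg(X)$ into $m$ copies of $\underline{k}$ (invoking \cite[\S5]{MT} for the fact that the universal additive invariant sends semi-orthogonal decompositions to direct sums), so $U(\perf_\dg(X))_\bbZ \simeq \oplus_{i=1}^m {\bf 1}$, and then Theorem~\ref{thm:main1} applies with $R=\bbZ$ since $\bbZ[1/(2d)!]$ is a principal ideal domain. The paper presents Corollary~\ref{cor:main} as an immediate application of Theorem~\ref{thm:main1} without writing out these steps, and your reconstruction matches that implicit argument.
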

Thanks to the work \cite{Beilinson,kapranovquadric,Kawamata,kuznetfanothreefolds,orlovprojbund}, Corollary~\ref{cor:main} applies to projective spaces and rational surfaces (in the case of an arbitrary base field $k$), and to smooth quadric hypersurfaces, Grassmannians, flag varieties, Fano threefolds with vanishing odd cohomology, and toric varieties (in the case where $k=\bbC$). Conjecturally, it applies also to all the homogeneous spaces of the form $G/P$, with $P$ a parabolic subgroup of a semisimple algebraic group $G$; see \cite{KP}.

Our second main result, which partially answers Question B, is the following:
\begin{theorem}\label{thm:main2}
Let $X$ be an irreducible smooth projective $k$-scheme of dimension $d$. Under the assumption $\bbZ\subseteq R$, the following implication holds:
\begin{equation}\label{eq:implication-indecomp}
M(X)_{R[1/(2d)!]}\,\,\mathrm{decomposable} \Rightarrow U(\perf_\dg(X))_{R[1/(2d)!]}\,\,\mathrm{decomposable}\,.
\end{equation}
\end{theorem}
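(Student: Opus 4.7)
The plan is to lift a non-trivial idempotent of $M(X)_{R'}$, where $R':=R[1/(2d)!]$, to a non-trivial idempotent of $U(\perf_\dg(X))_{R'}$ via the standard bridge between the two categories of motives. Decomposability of $M(X)_{R'}$ furnishes a non-trivial idempotent
$$e \in \End_{\Chow(k)_{R'}}(M(X)) = \mathrm{CH}^d(X\times X)_{R'}\,,$$
so $e\circ e = e$ with $e \neq 0$ and $e \neq \Delta_X$. The goal is to produce a corresponding non-trivial idempotent $\tilde{e}$ in the endomorphism ring of $U(\perf_\dg(X))_{R'}$.

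Next, I would invoke the bridge between Chow and noncommutative motives established in \cite{MT} (and going back to Kontsevich): for a smooth projective $k$-scheme $Y$ there is a canonical identification
$$\End_{\Hmo_0(k)_{R'}}(U(\perf_\dg(Y))) \simeq K_0(Y\times Y)_{R'}\,,$$
and, once $(2d)!$ is invertible, the Chern character yields an isomorphism
$$\mathrm{ch}: K_0(Y\times Y)_{R'} \stackrel{\sim}{\too} \bigoplus_i \mathrm{CH}^i(Y\times Y)_{R'}\,.$$
Under this decomposition the degree-$d$ component carries the Chow composition, while the global composition on $K_0$ differs from this by Todd-class corrections supplied by Grothendieck--Riemann--Roch. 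The technical step is then to use the bridge to lift $e$ to a genuine idempotent $\tilde{e}$ in $K_0(X\times X)_{R'}$, and hence to an idempotent endomorphism of $U(\perf_\dg(X))_{R'}$ in $\Hmo_0(k)_{R'}$. This parallels the lift used in the proof of Theorem \ref{thm:main1}, run now in the Chow-to-NC direction, and it is here that inverting $(2d)!$ is essential to annihilate the Todd-class obstructions.

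Finally, I would check that $\tilde{e}$ is non-trivial: via the Chern character decomposition its degree-$d$ component equals $e\neq 0$, so $\tilde{e}\neq 0$; and the degree-$d$ Chern character component of the NC identity is the Chow-diagonal $\Delta_X$, so from $e\neq\Delta_X$ one obtains $\tilde{e}\neq\mathrm{id}$. Consequently $U(\perf_\dg(X))_{R'}$ splits non-trivially. The main obstacle is the lifting step of the second paragraph: a Chow idempotent is not automatically idempotent in $K_0$ because of the Todd-class corrections, and the inversion of $(2d)!$ is precisely what is needed to control them, exactly as in the proof of Theorem \ref{thm:main1}.
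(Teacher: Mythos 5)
Your strategy---transfer a nontrivial idempotent of $M(X)_{R[1/(2d)!]}$ to one of $U(\perf_\dg(X))_{R[1/(2d)!]}$ using the $K_0$-motives bridge and the Chern character isomorphism obtained by inverting $(2d)!$---matches the paper's in outline, but the central step is left as an unsubstantiated sketch and the stated difficulty is misplaced. You frame the problem as one of ``lifting'' $e$ to a $K_0$-idempotent while ``annihilating Todd-class obstructions,'' and you claim that ``a Chow idempotent is not automatically idempotent in $K_0$.'' No such lifting problem arises: what the paper actually establishes is an injective $R[1/(2d)!]$-algebra homomorphism
$$\End(M(X)_{R[1/(2d)!]}) \hookrightarrow \End(U(\perf_\dg(X))_{R[1/(2d)!]})\,,$$
under which any nontrivial idempotent of the source is carried to one of the target automatically, with no further correction.

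The missing idea in your write-up is the orbit category and the faithfulness of the projection functor $\pi\colon\Chow(k)_{R[1/(2d)!]}\to\Chow(k)_{R[1/(2d)!]}/_{\!\!-\otimes R(1)}$. The paper's chain of identifications runs $\End(M(X)) \hookrightarrow \End(\pi(M(X))) \simeq \End(M_0(X)) \simeq \End(U(\perf_\dg(X)))$, where the first step is injective because $\pi$ is a faithful additive functor (hence an injective ring map on endomorphisms), the middle isomorphism is the fully faithfulness of $\Psi^\oplus$ established in Proposition~\ref{prop:induced} (this is where the Chern character isomorphism and the invertibility of the Todd class from Proposition~\ref{prop:key-Soule} enter), and the last is the fully faithfulness of $\theta$. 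You assert without proof that ``the degree-$d$ component carries the Chow composition''; that is exactly the content of the faithful embedding of $\End_{\Chow(k)_{R[1/(2d)!]}}(M(X))$ as the $j=0$ graded piece of the orbit-category endomorphism ring, and it is not something the Chern character decomposition alone provides. The Todd twist is what makes $\Psi^\oplus$ a ring isomorphism, not an obstruction to be overcome; once the chain is assembled, the transfer of decomposability is immediate and your non-triviality check becomes redundant.
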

It is unclear to the authors if the (strict) converse of \eqref{eq:implication-indecomp} also holds. As the following proposition shows, if one does not invert the dimension of $X$, this is false~!
\begin{proposition}\label{prop:main3}
Let $A$ be a central simple $k$-algebra of degree $\sqrt{\mathrm{dim}(A)}=d$ and $X=\mathrm{SB}(A)$ the associated Severi-Brauer variety. 
\begin{itemize}
\item[(i)] For every commutative ring $R$ one has the following motivic decomposition
\begin{equation}\label{eq:decomposition-SB}
U(\perf_\dg(X))_R \simeq {\bf 1} \oplus U(\underline{A})_R \oplus U(\underline{A})_R^{\otimes 2} \oplus \cdots \oplus U(\underline{A})_R^{\otimes d-1}\,.
\end{equation}
In particular, the noncommutative motive $U(\perf_\dg(X))_R$ is decomposable.
\item[(ii)] (Karpenko) When $A$ is moreover a division algebra and $d=p^s$ for a certain prime $p$ and integer $s\geq1$, then the Chow motive $M(X)_\bbZ$ (and even $M(X)_{\bbZ/p\bbZ}$) is indecomposable.
\end{itemize}
\end{proposition}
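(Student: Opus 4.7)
\smallbreak
\noindent
\textbf{Proof plan for Proposition~\ref{prop:main3}.}

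For part (i), the strategy is to deduce the claimed decomposition from Bernardara's semi-orthogonal decomposition
$$\perf(X) \;=\; \langle \perf(k),\, \perf(A),\, \perf(A^{\otimes 2}),\, \ldots,\, \perf(A^{\otimes d-1}) \rangle,$$
which generalises Beilinson's classical decomposition of the projective space (the case $A=k$). This decomposition lifts canonically to the unique dg enhancement $\perf_\dg(X)$, giving a semi-orthogonal decomposition of $\perf_\dg(X)$ whose components are Morita equivalent to the one-object dg categories $\underline{A^{\otimes i}}$, $0\leq i\leq d-1$.

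Next, since the universal additive invariant $U(-)_R$ inverts Morita equivalences and, by definition, converts semi-orthogonal decompositions into direct sums, we obtain
$$U(\perf_\dg(X))_R \;\simeq\; U(\underline{k})_R \oplus U(\underline{A})_R \oplus U(\underline{A^{\otimes 2}})_R \oplus \cdots \oplus U(\underline{A^{\otimes d-1}})_R.$$
Using the symmetric monoidal structure of $U(-)_R$ (which sends $\otimes$ of dg categories to $\otimes$ of noncommutative motives) together with the identification $\underline{A^{\otimes i}} \simeq \underline{A}^{\otimes i}$, we may rewrite each summand as a tensor power of $U(\underline{A})_R$, yielding the desired isomorphism \eqref{eq:decomposition-SB}. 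Since by hypothesis $d\geq 2$ in the case of a non-trivial central simple algebra, this noncommutative motive splits off at least two non-zero summands (the first being ${\bf 1}$), hence is decomposable.

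For part (ii), the statement is quoted verbatim from Karpenko's work on the Chow motives of $p$-primary Severi-Brauer varieties, so the plan is simply to cite his theorem: when $A$ is a division algebra of degree $p^s$, the Chow motive $M(X)_{\bbZ/p\bbZ}$ (and therefore $M(X)_\bbZ$ as well) is indecomposable. The main subtlety to address is that part (i) together with Theorem~\ref{thm:main2} would force $M(X)_{R[1/(2d)!]}$ to be decomposable; there is no contradiction precisely because the coefficient ring in Karpenko's indecomposability is $\bbZ$ (or $\bbZ/p\bbZ$), not $\bbZ[1/(2d)!]$. Consequently, part (ii) is presented as evidence that inverting the integer $(2d)!$ in Theorem~\ref{thm:main2} cannot be dispensed with, and the expected obstruction—reconciling the two statements—is automatically resolved because the prime $p$ dividing $d$ is inverted in $R[1/(2d)!]$.
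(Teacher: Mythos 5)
Your proof follows the same route as the paper's: Bernardara's semi-orthogonal decomposition $\perf(X)=\langle\perf(k),\perf(A),\ldots,\perf(A^{\otimes d-1})\rangle$, the fact (from \cite[\S5]{MT}) that $U(-)_R$ turns semi-orthogonal decompositions into direct sums, the Morita equivalence $\perf_\dg(A^{\otimes i})\simeq\underline{A}^{\otimes i}$, the symmetric monoidality of $U(-)_R$ to rewrite the summands as tensor powers, and a citation to Karpenko for item (ii). Your closing remark about why (ii) is consistent with Theorem~\ref{thm:main2} only after inverting $(2d)!$ is a correct observation that the paper leaves implicit.
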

Proposition \ref{prop:main3} shows that the decomposition \eqref{eq:decomposition-SB} is ``truly noncommutative''.

Given a smooth projective $k$-scheme $X$ and an integer $l$, let us write $M(X)_R(l)$ instead of $M(X)_R \otimes {\bf L}^{\otimes l}$. Our third main result, which in particular answers Question C, is the following:
\begin{theorem}\label{thm:new}
Let $\{X_i\}_{1\leq i \leq n}$ (resp. $\{Y_j\}_{1\leq j \leq m}$) be irreducible smooth projective $k$-schemes of dimension $d_i$ (resp. $d_j$), $d:=\mathrm{max}\{d_i,d_j\,|\, i,j\}$, and $\{l_i\}_{1 \leq i \leq n}$ (resp. $\{l_j\}_{1 \leq j \leq m}$) arbitrary integers. Assume that $\bbZ \subseteq R$ and $1/(2d)! \in R$. Under these assumptions, we have the following implication
$$\oplus_i M(X_i)_R(l_i) \simeq \oplus_j M(Y_j)_R(l_j) \Rightarrow \oplus_i U(\perf_\dg(X_i))_R \simeq \oplus_j U(\perf_\dg(Y_j))_R\,.
$$
\end{theorem}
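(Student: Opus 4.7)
The plan is to construct an additive symmetric monoidal functor $\Phi_R : \Chow(k)_R \to \Hmo_0(k)_R$ that sends $M(X)_R$ to $U(\perf_\dg(X))_R$ and the Lefschetz motive $\mathbf{L}$ to the $\otimes$-unit $\mathbf{1}$. Granting such a $\Phi_R$, the theorem follows at once: applying $\Phi_R$ to the given isomorphism collapses each Tate twist $\mathbf{L}^{\otimes l}$ to $\mathbf{1}$ and delivers the desired isomorphism $\bigoplus_i U(\perf_\dg(X_i))_R \simeq \bigoplus_j U(\perf_\dg(Y_j))_R$ in $\Hmo_0(k)_R$.

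To define $\Phi_R$ on morphisms I would use the Chern character. For smooth projective $X$ and $Y$, Chow correspondences $\Hom_{\Chow(k)_R}(M(X)_R(l), M(Y)_R(l'))$ assemble, as $l$ and $l'$ vary, into the full graded Chow group $\bigoplus_i \mathrm{CH}^i(X \times Y)_R$, while noncommutative correspondences from $U(\perf_\dg(X))_R$ to $U(\perf_\dg(Y))_R$ are computed as $K_0(X \times Y)_R$, via the Morita equivalence $\perf_\dg(X) \otimes \perf_\dg(Y) \simeq \perf_\dg(X \times Y)$. Since each product $X_i \times Y_j$ has dimension at most $2d$, and since $1/(2d)! \in R$, the Chern character
$$\mathrm{ch} : K_0(X \times Y)_R \longrightarrow \bigoplus_i \mathrm{CH}^i(X \times Y)_R$$
is an $R$-linear isomorphism, and its inverse transports any Chow correspondence---regardless of its Tate twist---into a $K_0$-class. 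This is how $\Phi_R$ would act on Hom-groups; the prescriptions $M(X)_R \mapsto U(\perf_\dg(X))_R$ and $\mathbf{L} \mapsto \mathbf{1}$ are then forced at the level of objects, the latter because the inverse Chern character conflates all Chow degrees into a single $K_0$-class.

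The main technical obstacle is verifying that this assignment respects composition of correspondences. Composition in $\Chow(k)_R$ combines pullback, intersection product, and proper pushforward along a projection $X \times Y \times Z \to X \times Z$, whereas composition of $K$-theoretic correspondences replaces the intersection by derived tensor product of perfect complexes; the discrepancy is measured by the Todd class of the relative tangent bundle of that projection via Grothendieck-Riemann-Roch. Because that projection has relative dimension at most $d$, its Todd class has terms of degree at most $d$ and is $R$-integral under the assumption $1/(2d)! \in R$. This upgrades the well-known rational compatibility between Chow and $K$-theoretic correspondences to an $R$-integral statement, yielding the functor $\Phi_R$ and hence the theorem.
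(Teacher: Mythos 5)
Your proposal follows essentially the same route as the paper's proof: pass to graded Chow groups (implicitly, the orbit category $\Chow(k)_R/_{\!-\otimes R(1)}$ where Hom-sets are $\oplus_i CH^i(X\times Y)_R$), use the Chern character twisted by the Todd class as an $R$-linear isomorphism onto $K_0(X\times Y)_R$ once $1/(2d)!\in R$ (the paper's Propositions \ref{prop:key-Soule} and \ref{prop:induced}, built on Soul\'e's result), and invoke Grothendieck--Riemann--Roch for compatibility with composition of correspondences (the paper's Proposition \ref{prop:functor}, citing Gillet--Soul\'e). The one correction worth making is that your $\Phi_R$ cannot be defined on all of $\Chow(k)_R$ with target $\Hmo_0(k)_R$: the Chern character is an isomorphism only after inverting primes depending on the dimension of the schemes involved, so the construction must be restricted to the full additive subcategory generated by the $M(X_i)_R(l)$ and $M(Y_j)_R(l)$ — which is exactly why the paper packages the argument as a fully faithful functor $\Psi^\oplus$ defined on the subcategory $(X_1,\ldots,X_n,Y_1,\ldots,Y_m)^\oplus_R$ of $K_0$-motives rather than as a global functor out of $\Chow(k)_R$.
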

It is unclear to the authors if the (strict) converse of Theorem~\ref{thm:new} also holds. As the following (counter-)example shows, this is false in general !
\begin{example}
The Chow motives $M(X)_\bbZ$ and $M(\widehat{X})_\bbZ$ of an abelian variety $X$ and of its dual $\widehat{X}$ are in general not isomorphic. However, thanks to the work \cite{Mukai}, we have $U(\perf_\dg(X))_R \simeq U(\perf_\dg(\widehat{X}))_R$ for every commutative ring $R$.
\end{example}
Finally, by combining Theorem~\ref{thm:new} with \eqref{eq:categories}, we obtain the application:
\begin{corollary}\label{cor:main2}
Let $X$ (resp. $Y$) be an irreducible smooth projective $k$-scheme of dimension $d_X$ (resp. $d_Y$), and $d:=\mathrm{max}\{d_X, d_Y\}$. Assume that $\bbZ \subseteq R$ and $1/(2d)! \in R$. Under these assumptions, we have the following implication
$$ M(X)_R \simeq M(Y)_R \Rightarrow E(X) \simeq E(Y)$$
for every additive invariant $E$ with values in a $R$-linear category.
\end{corollary}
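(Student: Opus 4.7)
The plan is to reduce the statement to a direct application of Theorem~\ref{thm:new} followed by the universal property of noncommutative motives recorded in~\eqref{eq:categories}.

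First, I would specialize Theorem~\ref{thm:new} to the case $n=m=1$, with the single schemes $X_1 = X$ and $Y_1 = Y$, and twist integers $l_1 = l_1' = 0$. Since $d = \max\{d_X, d_Y\}$ and the hypotheses $\bbZ \subseteq R$, $1/(2d)! \in R$ agree verbatim with those of Theorem~\ref{thm:new}, that theorem immediately supplies the implication
$$ M(X)_R \simeq M(Y)_R \; \Longrightarrow \; U(\perf_\dg(X))_R \simeq U(\perf_\dg(Y))_R. $$

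Next, given an additive invariant $E \colon \dgcat(k) \to \mathsf{D}$ taking values in an $R$-linear additive category $\mathsf{D}$, the equivalence of categories~\eqref{eq:categories} provides an (essentially unique) $R$-linear additive functor $\overline{E} \colon \Hmo_0(k)_R \to \mathsf{D}$ such that $E \simeq \overline{E} \circ U(-)_R$. Applying $\overline{E}$ to the isomorphism of noncommutative motives obtained in the previous step yields $E(\perf_\dg(X)) \simeq E(\perf_\dg(Y))$, which is the desired conclusion under the standard identification $E(X) := E(\perf_\dg(X))$.

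There is no genuine obstacle in this corollary: it is a purely formal two-line consequence of the preceding results, and all the substantive work has already been carried out inside Theorem~\ref{thm:new}. I would therefore expect the proof in the paper to consist essentially of the two invocations above.
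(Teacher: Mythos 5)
Your proof is correct and follows exactly the route the paper intends: the paper states only that Corollary~\ref{cor:main2} is obtained ``by combining Theorem~\ref{thm:new} with \eqref{eq:categories},'' and your two steps (specialize Theorem~\ref{thm:new} to $n=m=1$ with trivial twists, then factor $E$ through $U(-)_R$ via \eqref{eq:categories} and apply the resulting functor to the isomorphism) are precisely that combination made explicit.
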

\section{Preliminaries}\label{sec:preliminaries}
\subsection{Orbit categories}\label{sub:orbit}
Let $\cC$ be an additive symmetric monoidal category and $\cO \in \cC$ a $\otimes$-invertible object. Recall from \cite[\S7]{CvsNC} that the {\em orbit category} $\cC\!/_{\!\!-\otimes \cO}$ has the same objects as $\cC$ and morphisms
$$ \Hom_{\cC\!/_{\!\!-\otimes \cO}}(a,b):= \oplus_{j \in \bbZ} \Hom_\cC(a,b\otimes\cO^{\otimes j})\,.$$
Given objects $a, b$ and $c$ and morphisms
\begin{eqnarray*} \underline{f}=\{f_j\}_{j\in \bbZ} \in \oplus_{j \in \bbZ} \Hom_{\cC}(a,b\otimes \cO^{\otimes j})&\underline{g}=\{g_k\}_{k\in \bbZ} \in \oplus_{k \in \bbZ} \Hom_{\cC}(b,c\otimes \cO^{\otimes k})\,,&
\end{eqnarray*}
the $l^{\mathrm{th}}$-component of the composition $\underline{g} \circ \underline{f}$ is given by the finite sum $\sum_j ((g_{l-j} \otimes \cO^{\otimes j} )\circ f_j)$. We obtain in this way an additive category $\cC\!/_{\!\!-\otimes \cO}$ and a canonical additive projection functor $\pi: \cC \to \cC\!/_{\!\!-\otimes \cO}$. Note that $\pi$ comes equipped with a natural isomorphism $\pi\circ (-\otimes \cO) \stackrel{\sim}{\Rightarrow} \pi$ and that it is universal among all such functors. Note also that this construction is functorial: given any other symmetric monoidal category $\cD$, a $\otimes$-invertible object $\cO'\in \cD$, and an additive $\otimes$-functor $\cC\to \cD$ which sends $\cO$ to $\cO'$, we obtain an induced additive functor $\cC/_{\!\!-\otimes \cO} \to \cD/_{\!\!-\otimes \cO'}$.
\subsection{Dg categories}\label{sub:dg}
Let $\cC(k)$ be the category of complexes of $k$-vector spaces. A {\em differential graded (=dg) category $\cA$} is a category enriched over $\cC(k)$ (morphisms sets $\cA(x,y)$ are complexes) in such a way that composition fulfills the Leibniz rule $d(f \circ g) =d(f) \circ g +(-1)^{\mathrm{deg}(f)}f \circ d(g)$. A {\em dg functor} $F:\cA\to \cB$ is  a functor enriched over $\cC(k)$; consult \cite{ICM}. In what follows we will write $\dgcat(k)$ for the category of (small) dg categories and dg functors. 

Let $\cA$ be a dg category. The {\em opposite} dg category $\cA^\op$ has the same objects as $\cA$ and complexes of morphisms given by $\cA^\op(x,y):=\cA(y,x)$. A {\em right $\cA$-module} $M$ is a dg functor $M:\cA^\op \to \cC_\dg(k)$ with values in the dg category $\cC_\dg(k)$ of complexes of $k$-vector spaces. Let us denote by $\cC(\cA)$ the category of right $\cA$-modules; see \cite[\S2.3]{ICM}. Recall from \cite[\S3.2]{ICM} that the {\em derived category $\cD(\cA)$ of $\cA$} is the localization of $\cC(\cA)$ with respect to the class of objectwise quasi-isomorphisms. A dg functor $F:\cA\to \cB$ is called a {\em Morita equivalence} if the restriction of scalars functor $\cD(\cB) \stackrel{\sim}{\to} \cD(\cA)$ is an equivalence of (triangulated) categories; see \cite[\S4.6]{ICM}. 

The {\em tensor product $\cA\otimes\cB$} of two dg categories $\cA$ and $\cB$ is defined as follows: the set of objects is the cartesian product of the sets of objects of $\cA$ and $\cB$ and the complexes of morphisms are given by $(\cA\otimes\cB)((x,z),(y,w)):= \cA(x,y) \otimes \cB(z,w)$. 

\subsection{$K_0$-motives}\label{sub:K0motives}
Recall from \cite[\S5.2]{GiSou}\cite[\S5.2]{GiSou2} the construction of the category $\KM(k)_R$ of {\em $K_0$-motives}. As explained in {\em loc. cit.}, its objects are the smooth projective $k$-schemes, its morphisms are given by 
$$ \Hom_{\KM(k)_R}(X,Y) := K_0(X\times Y)_R = K_0(X\times Y)_\bbZ \otimes_\bbZ R\,,$$
and its symmetric monoidal structure is induced by the product of $k$-schemes. Furthermore, $\KM(k)_R$ comes equipped with a canonical (contravariant) $\otimes$-functor
\begin{eqnarray}
M_0(-)_R: \SmProj(k)^\op \too \KM(k)_R && X \mapsto X
\end{eqnarray}
that sends a morphism $f:X \to Y$ in $\SmProj(k)$ to the class $[\cO_{\Gamma_f^t}] \in K_0(Y\times X)_R$ of the transpose $\Gamma_f^t$ of the graph $\Gamma_f:= \{ (x,f(x))\,|\, x \in X\}\subset X\times Y$ of $f$.
\begin{notation}\label{not:subcategories}
Given a finite family $X_1, \ldots, X_n$ of irreducible smooth projective $k$-schemes of dimensions $d_1, \ldots, d_n$, let us denote by $(X_1,\ldots, X_n)_R$ the full subcategory of $\KM(k)_R$ consisting of the objects $\{M_0(X_i)_R\,|\, 1 \leq i \leq n\}$. Its closure (inside $\KM(k)_R$) under finite direct sums will be denoted by $(X_1,\ldots, X_n)^\oplus_R$.
\end{notation}
As explained in \cite[\S4.4]{MT}, there is a well-defined $R$-linear additive {\em fully faithful} $\otimes$-functor $\theta$ making the following diagram commute
\begin{equation}\label{eq:diagramK0}
\xymatrix{
\SmProj(k)^\op \ar[d]_-{M_0(-)_R} \ar[rr]^-{\perf_\dg(-)} && \dgcat(k) \ar[d]^-{U(-)_R} \\
\KM(k)_R \ar[rr]_-{\theta} && \Hmo_0(k)_R\,.
}
\end{equation}
Intuitively speaking, the category of $K_0$-motives embeds fully faithfully into the category of noncommutative motives.

\section{Proof of Theorem~\ref{thm:main1}}\label{sec:proof-main1}
Recall that by assumption $\bbZ\subseteq R$. Let us then denote by $R[1/\bbZ]$ the localization of $R$ at $\bbZ\backslash \{0\}$.
\begin{proposition}\label{prop:functor}
There exists a well-defined additive functor $\Psi$ making the following diagram commute
\begin{equation}\label{eq:diagram-first}
\xymatrix{
\SmProj(k)^\op \ar[d]_-{M(-)_{R[1/\bbZ]}} \ar@{=}[r]& \SmProj(k)^\op \ar[dd]^-{M_0(-)_R} \\
\Chow(k)_{R[1/\bbZ]} \ar[d]_-{\pi} & \\
\Chow(k)_{R[1/\bbZ]}/_{\!\!-\otimes R[1/\bbZ](1)} & \KM(k)_R \ar[l]^-{\Psi}\,,
}
\end{equation}
where $R[1/\bbZ](1) \in \Chow(k)_{R[1/\bbZ]}$ denotes the Tate motive.
\end{proposition}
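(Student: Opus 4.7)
I would construct $\Psi$ via a Todd-class-corrected Chern character, leveraging that $\mathrm{ch}$ becomes an isomorphism after inverting all nonzero integers, and invoking Grothendieck--Riemann--Roch (GRR) to secure functoriality. On objects I would set $\Psi(X):=M(X)_{R[1/\bbZ]}$. On a morphism $\alpha\in\Hom_{\KM(k)_R}(S,T)=K_0(S\times T)_R$, I would set
$$\Psi(\alpha)\,:=\,\mathrm{ch}(\alpha)\cdot p_S^\ast\mathrm{td}(T_S)\;\in\;\mathrm{CH}^\ast(S\times T)_{R[1/\bbZ]}\,,$$
where $p_S:S\times T\to S$ is the first projection. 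Since $\bbZ\subseteq R$, the rational Chern character induces an isomorphism $K_0(S\times T)_{R[1/\bbZ]}\isotoo\mathrm{CH}^\ast(S\times T)_{R[1/\bbZ]}$, and the standard description of Chow correspondences together with the orbit category formalism of \S\ref{sub:orbit} identifies this group with $\Hom_{\Chow(k)_{R[1/\bbZ]}/_{\!\!-\otimes R[1/\bbZ](1)}}(M(S)_{R[1/\bbZ]},M(T)_{R[1/\bbZ]})$. The $R$-linearity and additivity of $\Psi$ on hom sets are then immediate.

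To check that $\Psi$ preserves composition, given $\alpha\in K_0(Y\times X)_R$ and $\beta\in K_0(Z\times Y)_R$, I would apply GRR to the proper morphism $p_{ZX}:Z\times Y\times X\to Z\times X$. Writing $q_Z,q_Y,q_X$ for the projections from the triple product, multiplicativity of the Todd class gives $\mathrm{td}(T_{Z\times Y\times X})=p_{ZX}^\ast\mathrm{td}(T_{Z\times X})\cdot q_Y^\ast\mathrm{td}(T_Y)$, and combining GRR with the projection formula yields
$$\mathrm{ch}\bigl((p_{ZX})_\ast\gamma\bigr)\,=\,(p_{ZX})_\ast\bigl(\mathrm{ch}(\gamma)\cdot q_Y^\ast\mathrm{td}(T_Y)\bigr)\,.$$
Specializing to $\gamma=p_{ZY}^\ast\beta\cdot p_{YX}^\ast\alpha$, multiplying by $r_Z^\ast\mathrm{td}(T_Z)$ (with $r_Z:Z\times X\to Z$), and pulling this factor inside the pushforward as $q_Z^\ast\mathrm{td}(T_Z)$ via the projection formula, one recognises precisely $\Psi(\alpha)\circ\Psi(\beta)$ computed through the usual Chow correspondence composition. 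The identity morphism $[\cO_{\Delta_X}]$ is handled as the case $f=\mathrm{id}_X$ of the diagram-commutativity check below.

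Commutativity of the diagram then reduces to showing $\Psi([\cO_{\Gamma_f^t}])=[\Gamma_f^t]$ for every $f:X\to Y$ in $\SmProj(k)$. Applying GRR to the closed immersion $i=(f,\mathrm{id}_X):X\hookrightarrow Y\times X$, using $p_X\circ i=\mathrm{id}_X$ and the projection formula, one obtains
$$\mathrm{ch}([\cO_{\Gamma_f^t}])\cdot p_Y^\ast\mathrm{td}(T_Y)\cdot p_X^\ast\mathrm{td}(T_X)\,=\,p_X^\ast\mathrm{td}(T_X)\cdot[\Gamma_f^t]\,,$$
and cancellation of the invertible factor $p_X^\ast\mathrm{td}(T_X)$ yields the required identity, since $M(f)_{R[1/\bbZ]}$ is represented by $[\Gamma_f^t]$. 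I expect the main technical obstacle to be the bookkeeping in the composition step: one must verify that placing the Todd correction \emph{only on the source} is the precise convention compatible with both GRR on $p_{ZX}$ and with the Chow composition law, so that the three a priori Todd factors on $Z\times Y\times X$ collapse to the single unbalanced factor $q_Y^\ast\mathrm{td}(T_Y)$ predicted by the orbit-category formalism.
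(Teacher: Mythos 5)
Your construction is the same as the paper's in spirit: a Todd--corrected Chern character, with functoriality and diagram--commutativity both deduced from Grothendieck--Riemann--Roch. The difference is mainly one of exposition. The paper simply cites \cite[pages~3128--3129]{GiSou2} for both the fact that the assignment is a functor and for the commutativity of \eqref{eq:diagram-first}, whereas you spell out the GRR manipulations on the triple product and on the graph embedding explicitly; this is a reasonable and self-contained way to discharge the appeal to \loccit\ One discrepancy is worth flagging. You place the Todd correction on the \emph{source}, writing $\Psi(\alpha)=\mathrm{ch}(\alpha)\cdot p_S^\ast\mathrm{td}(T_S)$ for $\alpha\in K_0(S\times T)$, while the paper's displayed formula reads $\alpha\mapsto ch(\alpha)\cdot\pi_Y^\ast(\mathrm{Td}(Y))$ with $\pi_Y:X\times Y\to Y$, \ie\ the Todd class of the \emph{target}. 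With the conventions stated elsewhere in \S\ref{sub:K0motives} --- namely that $M_0(f)=[\cO_{\Gamma_f^t}]\in K_0(Y\times X)=\Hom_{\KM(k)_R}(Y,X)$, that $\Hom_{\Chow(k)}(M(X),M(Y))\simeq CH^{d_X}(X\times Y)$, and that correspondence composition is the usual pull-multiply-push --- it is your source-Todd convention, not the target-Todd one, that makes your graph computation close to $[\Gamma_f^t]=\pi(M(f))$: the factor $p_Y^\ast\mathrm{td}(T_Y)$ coming from GRR applied to $i=(f,\id_X)$ is exactly cancelled by the source correction, as you show, whereas the target correction would leave a spurious factor $p_X^\ast\mathrm{td}(T_X)/p_Y^\ast\mathrm{td}(T_Y)$. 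Both placements produce a well-defined additive functor (your composition check works symmetrically either way, with the ``unbalanced'' $q_Y^\ast\mathrm{td}(T_Y)$ coming from GRR on the interior factor), but only the source placement is compatible with $M_0$ and $\pi\circ M(-)$ simultaneously. So your caveat at the end is pointing at the right issue, and in fact your convention is the one forced by the rest of the paper's setup; the displayed $\pi_Y^\ast(\mathrm{Td}(Y))$ in the published proof is best read as a convention mismatch inherited from the reference to Gillet--Soul\'e.
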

\begin{proof}
Let $X$ and $Y$ be irreducible smooth projective $k$-schemes of dimensions $d_X$ and $d_Y$. As explained in \cite[\S4]{Andre}, given $j \in \bbZ$, one has a canonical isomorphism
$$ \Hom_{\Chow(k)_{R[1/\bbZ]}}(M(X)_{R[1/\bbZ]},M(Y)_{R[1/\bbZ]}\otimes R[1/\bbZ](1)^{\otimes j})\simeq CH^{d_X +j}(X\times Y)_{R[1/\bbZ]}\,,$$
where $CH^{d_X+j}(X\times Y)_{R[1/\bbZ]}$ denotes the $R[1/\bbZ]$-linear Chow group of algebraic cycles of codimension $d_X +j$ on $X \times Y$ modulo rational equivalence. By definition of the orbit category category, the $R[1/\bbZ]$-module
$$
\Hom_{\Chow(k)_{R[1/\bbZ]}/_{\!\!-\otimes R[1/\bbZ](1)}}(\pi(M(X)_{R[1/\bbZ]}),\pi(M(Y)_{R[1/\bbZ]}))
$$
identifies with 
$$\oplus_{j \in \bbZ} CH^{d_X+j}(X\times Y)_{R[1/\bbZ]} = \oplus_{i=0}^{d_X+d_Y} CH^i(X\times Y)_{R[1/\bbZ]}\,.$$
This shows us that the category $\Chow(k)_{R[1/\bbZ]}/_{\!\!-\otimes R[1/\bbZ](1)}$ agrees with the category $\mathbf{CHM}_{kR[1/\bbZ]}$ of {\em all} correspondences considered at \cite[page~3128]{GiSou2}. On the other hand, recall from \S\ref{sub:K0motives} that
$$ \Hom_{\KM(k)_R}(M_0(X)_R, M_0(Y)_R)=K_0(X\times Y)_R\,.$$
The searched functor $\Psi$ is defined on objects by sending $M_0(X)_R$ to $\pi(M(X)_{R[1/\bbZ]})$. On morphisms is defined by the following assignment
\begin{eqnarray*}
K_0(X\times Y)_R \too \oplus_{i=0}^{d_X+d_Y} CH^i(X\times Y)_{R[1/\bbZ]} && \alpha \mapsto ch(\alpha) \cdot \pi^\ast_Y(\mathrm{Td}(Y))\,,
\end{eqnarray*}
where $ch(-)$ denotes the Chern character, $\mathrm{Td}(Y)$ the Todd class of $Y$ and $\pi_Y$ the projection $X\times Y \to Y$ morphism. As explained at \cite[page~3128]{GiSou2}, it follows from the Grothendieck-Riemann-Roch theorem that the above assignments give rise to a well-defined additive functor $\Psi$. The fact that the diagram \eqref{eq:diagram-first} commutes follows also from the Grothendieck-Riemann-Roch theorem; see \cite[page~3129]{GiSou2}.
\end{proof}
Consider the following diagram of additive functors
$$ \Chow(k)_R \to \cdots \to \Chow(k)_{R[1/n!]}\to \Chow(k)_{R[1/(n+1)!]} \to \cdots \to \Chow(k)_{R[1/\bbZ]}\,.$$
Since the Tate motive $R(1)$ is mapped to itself, the functoriality of $(-)/_{\!\!-\otimes R(1)}$ gives rise to the following diagram
$$ \cdots \cdots\cdots \to \Chow(k)_{R[1/n!]}/_{\!\!-\otimes R(1)} \to \cdots \to \Chow(k)_{R[1/\bbZ]}/_{\!\!-\otimes R[1/\bbZ](1)}\,.$$
\begin{proposition}\label{prop:aux-key3}
Given a finite family of irreducible smooth projective $k$-schemes $X_1, \ldots, X_n$ of dimensions $d_1, \ldots, d_n$, the composition (see Notation~\ref{not:subcategories})
$$(X_1,\ldots, X_n)_R \subset \KM(k)_R \stackrel{\Psi}{\too} \Chow(k)_{R[1/\bbZ]}/_{\!\!-\otimes R[1/\bbZ](1)}$$
factors through the functor
$$(\Chow(k)_{R[1/(2d)!]}/_{\!\!-\otimes R(1)})^\natural \too \Chow(k)_{R[1/\bbZ]}/_{\!\!-\otimes R[1/\bbZ](1)}$$
where $d:=\mathrm{max}\{d_1,\ldots, d_n\}$.
\end{proposition}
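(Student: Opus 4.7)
The plan is to inspect the formula for $\Psi$ produced in Proposition \ref{prop:functor} and to verify that, for morphisms between objects in $(X_1,\ldots,X_n)_R$, only denominators dividing $(2d)!$ actually occur. Once this is established, the desired factorization follows by functoriality of the orbit construction of Subsection \ref{sub:orbit} and by the universal property of the Karoubian envelope.

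Concretely, for $\alpha \in K_0(X_i \times X_j)_R = \Hom_{\KM(k)_R}(M_0(X_i)_R, M_0(X_j)_R)$, the proof of Proposition \ref{prop:functor} shows that
\[
\Psi(\alpha) \;=\; ch(\alpha) \cdot \pi_{X_j}^\ast(\mathrm{Td}(X_j)) \;\in\; \bigoplus_{k=0}^{d_i+d_j} CH^k(X_i \times X_j)_{R[1/\bbZ]},
\]
and since $\dim(X_i \times X_j) = d_i + d_j \le 2d$, only codimensions $k \le 2d$ come into play.

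Next I would bound the denominators of the two factors separately. For the Chern character, Newton's identities express $ch_k$ as $\frac{1}{k!}$ times an integral polynomial in $c_1,\ldots,c_k$; since $k \le 2d$ implies $k! \mid (2d)!$, the class $ch(\alpha)$ already lies in $\bigoplus_k CH^k(X_i\times X_j)_{R[1/(2d)!]}$. For the Todd class, the defining power series $x/(1-e^{-x})$ has its $n$-th coefficient governed by $B_{2m}/(2m)!$, so the degree-$k$ component $\mathrm{Td}_k(X_j)$ is an integral polynomial in the Chern classes of $X_j$ divided by an integer dividing $(2k)!$; as $k \le d_j \le d$, this divisor further divides $(2d)!$. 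Hence $\pi_{X_j}^\ast \mathrm{Td}(X_j)$, and therefore the product defining $\Psi(\alpha)$, lies in $\bigoplus_k CH^k(X_i\times X_j)_{R[1/(2d)!]}$. Under the identification from Proposition \ref{prop:functor}, this is exactly the hom-space in $\Chow(k)_{R[1/(2d)!]}/_{\!\!-\otimes R(1)}$ between $\pi(M(X_i)_{R[1/(2d)!]})$ and $\pi(M(X_j)_{R[1/(2d)!]})$.

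This yields a well-defined additive functor $(X_1,\ldots,X_n)_R \to \Chow(k)_{R[1/(2d)!]}/_{\!\!-\otimes R(1)}$, whose composite with the canonical inclusion into the Karoubian envelope $(-)^\natural$ and then with the base-change functor into $\Chow(k)_{R[1/\bbZ]}/_{\!\!-\otimes R[1/\bbZ](1)}$ recovers the restriction of $\Psi$; the verification reduces to the same Grothendieck--Riemann--Roch computation used in Proposition \ref{prop:functor}. The delicate point is the denominator bound for the Todd class, since the Chern character bound is immediate from Newton's identities and the passage through the Karoubian envelope at the end is purely formal.
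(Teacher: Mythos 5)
Your proof is correct and follows essentially the same strategy as the paper: reduce to bounding denominators of $ch(\alpha)$ and $\mathrm{Td}(X_j)$ on a product of dimension at most $2d$, then observe that the inclusion into the Karoubian envelope is formal. The only presentational difference is that the paper cites Pappas for the explicit denominator $T_m=\prod_p p^{[m/(p-1)]}$ of the degree-$m$ Todd component and checks via a short arithmetic lemma that the prime factors of $T_m$ (for $m\le d$) are all $\le 2d$, whereas you assert directly that the denominator divides $(2k)!$ from the Bernoulli-number structure of $x/(1-e^{-x})$; both observations give the same invertibility in $R[1/(2d)!]$.
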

\begin{proof}
Let $X_r$ and $X_s$ be any two $k$-schemes in $\{X_1, \ldots, X_n\}$. From the construction of the functor $\Psi$ (see the proof of Proposition~\ref{prop:functor}), it is clear that it suffices to show that the homomorphism
\begin{eqnarray*}
K_0(X_r \times X_s)_R \too \oplus_{i=0}^{d_r+d_s} CH^i(X_r \times X_s)_{R[1/\bbZ]} && \alpha \mapsto ch(\alpha)\cdot \pi^\ast_{X_s}(\mathrm{Td}(X_s))
\end{eqnarray*}
factors through
$$ \oplus_{i=0}^{d_r+d_s} CH^i(X_r \times X_s)_{R[1/(2d)!]} \too \oplus_{i=0}^{d_r+d_s} CH^i(X_r \times X_s)_{R[1/\bbZ]}\,.$$
Recall from \cite[\S1]{Pappas} that the Chern character $ch(\alpha)$ is given by $\sum_{m \geq 0} \frac{\mathfrak{S}_m(\alpha)}{m!}$ where $\mathfrak{S}_m(\alpha)$ is a polynomial with integral coefficients in the Chern classes $c_i(\alpha) \in CH^i(X_r \times X_s)_\bbZ$. Moreover, since $X_r \times X_s$ is of dimension $d_r + d_s$, one has $\mathfrak{S}_m(\alpha)=0$ for $m > d_r+d_s$. Hence, in order to show that $\mathrm{ch}(\alpha)$ belongs to $\oplus_{i=0}^{d_r + d_s}CH^i(X_r \times X_s)_{R[1/(2d)!]}$, it suffices to show that the numbers $\{m!\,|\, 0 \leq m \leq d_r+d_s\}$ are invertible in $R[1/(2d)!]$. This follows from Lemma~\ref{lem:aux-arithmetic}(i) below since $d_r + d_s \leq 2d$. Now, recall again from \cite[\S1]{Pappas} that the Todd class $\mathrm{Td}(X_s)$ is given by $\sum_{m \geq 0} \frac{\mathfrak{D}_m}{T_m}$. Here, $\mathfrak{D}_m$ is a polynomial with integral coefficients in the Chern classes and $T_m$ is the product $\prod_p p ^{[\frac{m}{p-1}]}$ taken over all the prime numbers; the 
symbol $[-]$ stands for the integral part. Since $X_s$ is of dimension $d_s$, one has also $\mathfrak{D}_m=0$ for $m > d_s$. Moreover, we have a well-defined ring homomorphism
\begin{equation}\label{eq:ring1}
\pi^\ast_{X_s}: \oplus_{i=0}^{d_s}CH^i(X_s)_{R[1/(2d)!]} \too \oplus_{i=0}^{d_r+d_s}CH^i(X_r\times X_s)_{R[1/(2d)!]}\,.
\end{equation}
Therefore, in order to show that $\mathrm{Td}(X_s)$ belongs to $\oplus_{i=0}^{d_s}CH^i(X_s)_{R[1/(2d)!]}$ (which by \eqref{eq:ring1} implies that $\pi^\ast_{X_s}(\mathrm{Td}(X_s))$ belongs to $\oplus_{i=0}^{d_r+d_s}CH^i(X_r\times X_s)_{R[1/(2d)!]}$), it suffices to show that the numbers $\{T_m\,|\, 0 \leq m \leq d_s\}$ are invertible in $R[1/(2d)!]$. Since $d_s \leq d$ this follows from Lemma~\ref{lem:aux-arithmetic}(ii) below.\end{proof}

\begin{lemma}\label{lem:aux-arithmetic}
Let $d$ be a non-negative integer.
\begin{itemize}
\item[(i)] If $m \leq 2d$ then $m!$ is invertible in $R[1/(2d)!]$. 
\item[(ii)] If $m \leq d$ then $T_m:=\prod_p p ^{[\frac{m}{p-1}]}$ is invertible in $R[1/(2d)!]$.
\end{itemize}
\end{lemma}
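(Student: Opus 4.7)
The plan is to reduce both parts to elementary divisibility statements in $\bbZ$, since invertibility of an integer $N$ in $R[1/(2d)!]$ will follow immediately once we show $N\mid (2d)!$.

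For part (i), this is essentially a tautology: assuming $m\leq 2d$, the quotient $(2d)!/m! = (m+1)(m+2)\cdots(2d)$ is a positive integer, so $m!$ divides $(2d)!$ and therefore becomes a unit in $R[1/(2d)!]$. Nothing more is needed.

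For part (ii), I would aim to prove the stronger divisibility $T_m\mid (2m)!$, from which the claim follows at once: since $m\leq d$ we have $2m\leq 2d$, so $(2m)!\mid (2d)!$ and hence $T_m\mid (2d)!$. To establish $T_m\mid (2m)!$ it suffices, prime by prime, to check that $v_p(T_m)\leq v_p((2m)!)$ for every prime $p$. By construction $v_p(T_m)=\lfloor m/(p-1)\rfloor$, while Legendre's formula gives
$$
v_p((2m)!) \;=\; \sum_{k\geq 1}\lfloor 2m/p^k\rfloor \;\geq\; \lfloor 2m/p\rfloor\,.
$$
So the whole argument boils down to the inequality $\lfloor 2m/p\rfloor \geq \lfloor m/(p-1)\rfloor$.

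The key observation—the only thing that is not purely formal—is that the real-number inequality $2m/p \geq m/(p-1)$ is equivalent, after clearing denominators, to $2(p-1)\geq p$, i.e.\ to $p\geq 2$, which holds for every prime. Since $\lfloor m/(p-1)\rfloor$ is an integer and $2m/p \geq m/(p-1) \geq \lfloor m/(p-1)\rfloor$, one concludes $\lfloor 2m/p\rfloor \geq \lfloor m/(p-1)\rfloor$, as required. I do not anticipate any real obstacle: the potentially tricky step is spotting the right inequality to compare with (it would be tempting but fruitless to work with $v_p(m!)=(m-s_p(m))/(p-1)$, since $\lfloor m/(p-1)\rfloor$ can exceed $v_p(m!)$), but once one compares $2m/p$ with $m/(p-1)$ the proof is a two-line calculation.
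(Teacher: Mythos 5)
Your part (i) is exactly the paper's argument, and your part (ii) is correct but follows a genuinely different route. The paper does not attempt the divisibility $T_m \mid (2d)!$. Instead it argues more cheaply: the exponent $[\frac{m}{p-1}]$ is nonzero only when $p-1 < m$, i.e.\ $p \leq m+1$, so (for $d \geq 1$, the case $d=0$ being trivial since $T_0 = 1$) every prime $p'$ dividing $T_m$ satisfies $p' \leq m+1 \leq d+1 \leq 2d$. Each such prime is therefore already invertible in $R[1/(2d)!]$, and hence so is any product of their powers, in particular $T_m$. This sidesteps any valuation counting. Your argument proves the sharper arithmetic fact $T_m \mid (2m)! \mid (2d)!$ via Legendre's formula and the floor inequality $\lfloor 2m/p \rfloor \geq \lfloor m/(p-1) \rfloor$ (which you correctly reduce to $p \geq 2$). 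That extra precision is not needed for the lemma, but it is a clean and correct strengthening; the trade-off is that you invoke $v_p((2m)!) = \sum_k \lfloor 2m/p^k \rfloor$, whereas the paper only needs to locate which primes can occur at all. Both proofs are valid; the paper's is shorter, yours yields an actual divisibility.
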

\begin{proof}
Item (i) follows simply from the fact that $m!$ is a factor of $(2d)!$. In what concerns item (ii) note first that the case $d=0$ is trivial since $T_0=1$. Let us then assume that $d\geq1$ and $m \geq 1$. Note that $[\frac{m}{p-1}]\neq0$ if and only if $p-1 < m$. Hence, the prime factors of $T_m$ are the prime numbers $p'$ such that $p'\leq m+1 \leq d+1\leq 2d$; in the last inequality we use the assumption $d\geq 1$. By item (i) this then implies that $T_m$ is invertible in $R[1/(2d)!]$.
\end{proof}

We now have all the ingredients needed for the conclusion of the proof of Theorem~\ref{thm:main1}. Recall that by hypothesis $U(\perf_\dg(X))_R\simeq \oplus_{i=1}^m {\bf 1}$ for a certain non-negative integer $m$. Since ${\bf 1}=U(\underline{k})_R \simeq U(\perf_\dg(\mathrm{spec}(k)))_R$ one concludes from the commutativity of diagram \eqref{eq:diagramK0} and from the additiveness and fully faithfulness of $\theta$ that $M_0(X)_R \simeq \oplus_{i=1}^m M_0(\mathrm{spec}(k))_R$. Recall also that by construction the orbit category $\Chow(k)_{R[1/(2d)!]}/_{\!\!-\otimes R(1)}$ is additive. Hence, by extending the functor $\Psi$ to finite direct sums (see Notation~\ref{not:subcategories}) one obtains a well-defined additive functor
\begin{equation}\label{eq:functor-sum}
\Psi^\oplus: (X_1, \ldots, X_n)^\oplus_R \too \Chow(k)_{R[1/(2d)!]}/_{\!\!-\otimes R(1)}\,.
\end{equation}
Note that the above isomorphism $M_0(X)_R \simeq \oplus_{i=1}^m M_0(\mathrm{spec}(k))_R$ belongs to the category $(X,\mathrm{spec}(k))^\oplus$. By applying to it the above functor \eqref{eq:functor-sum} one then obtains
\begin{equation}\label{eq:directsum}
\pi(M(X)_{R[1/(2d)!]})\simeq \oplus_{i=1}^m \pi(M(\mathrm{spec}(k))_{R[1/(2d)!]})\,.
\end{equation}
Consequently, using the equalities
$$
\begin{array}{rcl}
\oplus_{i=1}^m(\pi(M(\mathrm{spec}(k))_{R[1/(2d)!]}))&=&\pi(\oplus_{i=1}^mM(\mathrm{spec}(k))_{R[1/(2d)!]})\\
(\oplus_{i=1}^m M(\mathrm{spec}(k))_{R[1/(2d)!]})\otimes R(j)&=& \oplus_{i=1}^m R(j)\,,
\end{array}
$$
there exist morphisms in the orbit category
$$
\underline{f}=\{f_j\}_{j \in \bbZ}  \in  \oplus_{j\in \bbZ} \Hom_{\Chow(k)_{R[1/(2d)!]}}(M(X),\oplus_{i=1}^m R(j))$$
$$\underline{g}=\{g_k\}_{k \in \bbZ}  \in \oplus_{k\in \bbZ} \Hom_{\Chow(k)_{R[1/(2d)!]}}(\oplus_{i=1}^m M(\mathrm{spec}(k)), M(X) \otimes R(k))$$
verifying the equalities $\underline{g}\circ \underline{f} = \id = \underline{f}\circ \underline{g}$; note that we have removed some subscripts in order to simplify the exposition. As explained in \cite[\S4]{Andre}, one has
$$
 \Hom_{\Chow(k)_{R[1/(2d)!]}}(M(X),\oplus_{i=1}^m R(j))
\simeq  \oplus_{i=1}^m CH^{d+j}(X)_{R[1/(2d)!]}$$
and also the isomorphism
$$\Hom_{\Chow(k)_{R[1/(2d)!]}}(\oplus_{i=1}^m M(\mathrm{spec}(k)),M(X) \otimes R(k))\simeq \oplus_{i=1}^m CH^k(X)_{R[1/(2d)!]}\,.
$$
As a consequence, $f_j=0$ for $j\neq \{-d,\ldots, 0\}$ and $g_k=0$ for $k\neq \{0,\ldots, d\}$. The sets of morphisms $\{f_{-l}\,|\, 0 \leq l \leq d\}$ and $\{g_l \otimes R(1)^{\otimes (-l)}\,|\, 0 \leq l \leq d \}$ give then rise to well-defined morphisms 
\begin{eqnarray*}
\alpha:M(X)_{R[1/(2d)!]} &\too& \oplus_{l=0}^d \oplus _{i=1}^m R(1)^{\otimes (-l)} \\
\beta: \oplus_{l=0}^d \oplus_{i=1}^m R(1)^{\otimes (-l)} &\too& M(X)_{R[1/(2d)!]}
\end{eqnarray*}
in $\Chow(k)_{R[1/(2d)!]}$. The composition $\beta\circ \alpha$ agrees with the $0^{\mathrm{th}}$-component of the composition $\underline{g}\circ \underline{f} = \id$, \ie it agrees with the identity of $M(X)_{R[1/(2d)!]}$. Since $R(1)^{\otimes (-l)}={\bf L}^{\otimes l}$ we conclude then that $M(X)_{R[1/(2d)!]}$ is a direct factor of the Chow motive $\oplus_{l=0}^d \oplus_{i=1}^m {\bf L}^{\otimes l}\in \Chow(k)_{R[1/(2d)!]}$. By definition of the Lefschetz motive ${\bf L}$ we have the following equalities
\begin{equation}\label{eq:Kronecker}
 \Hom_{\Chow(k)_{R[1/(2d)!]}}({\bf L}^{\otimes p}, {\bf L}^{\otimes q}) = \delta_{pq} \cdot R[1/(2d)!] \qquad p,q\geq 0\,,
 \end{equation}
where $\delta_{pq}$ stands for the Kronecker symbol. This implies that $M(X)_{R[1/(2d)!]}$ decomposes into a direct sum (indexed by $l$) of direct factors of $\oplus^m_{i=1} {\bf L}^{\otimes l}$. Note that a direct factor of $\oplus^m_{i=1} {\bf L}^{\otimes l}$ is the same data as an idempotent element of $\End(\oplus^m_{i=1} {\bf L}^{\otimes l})$. Thanks to \eqref{eq:Kronecker} we have the following isomorphism
$$ \End(\oplus^m_{i=1} {\bf L}^{\otimes l}) \simeq M_{m \times m}(R[1/(2d)!])\,,$$
where the right-hand side stands for $m\times m$ matrices with coefficients in $R[1/(2d)!]$. Hence, a direct factor of $\oplus^m_{i=1} {\bf L}^{\otimes l}$ is the same data as an idempotent element of $M_{m \times m}(R[1/(2d)!])$, \ie a finitely projective projective $R[1/(2d)!]$-module. Since by hypothesis all these modules are free we then conclude that the only direct factors of $\oplus^m_{i=1} {\bf L}^{\otimes l}$ are its subsums. As a consequence, $M(X)_{R[1/(2d)!]}$ is isomorphic to a subsum of $\oplus_{l=0}^d \oplus_{i=1}^m {\bf L}^{\otimes l}$ indexed by a subset $S$ of $\{0, \ldots,d\} \times \{1, \ldots, m\}$. By construction of the orbit category we have $\pi({\bf L}^{\otimes l}) \simeq \pi(M(\mathrm{spec}(k))_{\bbZ[1/(2d)!]})$ for every $l \geq 0$. Therefore, since the above direct sum \eqref{eq:directsum} contains $m$ terms we conclude that the cardinality of $S$ is also $m$. This means that there is a choice of integers (up to permutation) $l_1, \ldots, l_m \in \{0, \ldots, d\}$ giving rise to 
an isomorphism
$$ M(X)_{R[1/(2d)!]} \simeq {\bf L}^{\otimes l_1} \oplus \cdots \oplus {\bf L}^{\otimes l_m}$$
in $\Chow(k)_{R[1/(2d)!]}$. This achieves the proof.
\section{Quadratic forms and associated quadrics}\label{sec:quadric}
Recall from \cite{lam-book} the basics about quadratic forms. In this section, $k$ will be a field of characteristic $\neq 2$ and $V$ a finite dimensional $k$-vector space.
\begin{definition}\label{def:quadratic}
Let $(V,q)$ be a quadratic form and $B_q$ the associated bilinear pairing. The dimension of $q$ is by definition the dimension of $V$. 
\begin{itemize}
\item[(i)] The form $(V,q)$ is called {\em non-singular} if the assignment $x \mapsto B_q(-,x)$ gives rise to an isomorphism $V \stackrel{\sim}{\to} V^*$; see \cite[\S I.1]{lam-book}.
\item[(ii)] The form $(V,q)$ is called {\em anisotropic} if the equality $q(x)=0$ holds
 only when $x=0$; see \cite[\S I.3]{lam-book}. Note that when $k$ is algebraically
 closed, any isotropic form has dimension 1.
\item[(iii)] Given two quadratic forms $(V_1,q_1)$ and $(V_2,q_2)$, the {\em orthogonal sum} $(V_1 \oplus V_2,q_1 \perp q_2)$ is the
quadratic form defined by the map $(q_1\perp q_2)(x_1,x_2)= q_1(x_1) +
q_2(x_2)$ \cite[\S I.2]{lam-book}. The {\em tensor product} $(V_1 \otimes V_2, q_1 \otimes q_2)$  is the
quadratic form defined by the map
$(q_1 \otimes q_2)(v_1 \otimes v_2) := q_1(v_1)\cdot q_2(v_2)$; see \cite[\S I.6]{lam-book}.
\item[(iv)] The \it determinant \rm of $q$ is defined as $d(q):= \det(M_q)\cdot(k^*)^2$, where $M_q$
is the matrix of the bilinear form $B_q$ and $k^*$ is the multiplicative group $k \setminus \{ 0 \}$. The determinant of $q$ is then an element
of $k^*/(k^*)^2$ which is well-defined up to isometry; see \cite[\S I.1]{lam-book}. The \it signed determinant \rm
of $q$ is defined as $d_{\pm}(q) := (-1)^{\frac{n(n-1)}{2}}d(q)$, where $n$ is the dimension of $q$; see \cite[\S II.2]{lam-book}.  
\item[(v)] The {\em discriminant extension} $k_q$ defined
by $q$ is the degree 2 quadratic extension $k(\sqrt{d})$ of the base field $k$ (with $d:=d_{\pm}(q)$).
\end{itemize}
\end{definition}
Every quadratic form $q$ gives rise to a $\bbZ/2\bbZ$-graded Clifford algebra $\cC(q)$; see \cite[\S V.1]{lam-book}. 
The even part $\cC_0(q)$ of $\cC(q)$ is called the {\em even Clifford algebra of $q$}.
Suppose $q$ is nonsingular. When $q$ is odd dimensional, $\cC_0(q)$ is a central simple $k$-algebra.
On the other hand, when $q$ is even dimensional, $\cC_0(q)$ is
a central simple $k_q$-algebra, and we have the following two cases: (i) whenever $d_{\pm}(q)$ is not a square in $k$,
the even Clifford algebra $\cC_0(q)$ is a central simple $k_q$-algebra; (ii) whenever $d_{\pm}(q)$ is a square in $k$
(that is, $k_q=k \times k$), the even Clifford algebra $\cC_0(q)$ is the product of two isomorphic central simple $k$-algebras.
{In any case, we get a well defined central simple algebra, i.e. an Azumaya algebra. We denote by $\beta_q$ 
such a central simple algebra and call it the {\em Clifford invariant} of $q$. The following definitions are not standard, but follow automatically from \cite[\S V.2]{lam-book}.

\begin{definition}
Let $(V,q)$ be a non-singular quadratic form over $k$.
\begin{itemize}
\item[(i)] The form $(V,q)$ has {\em trivial discriminant} if $k_q$ splits, \ie if $k_q = k \oplus k$. Equivalently, $(V,q)$ has trivial discriminant if $d_\pm(q)=1 \in k^*/(k^*)^2$.
\item[(ii)] The form $(V,q)$ has \it trivial Clifford invariant \rm if $\beta_q = 0$ 
in the Brauer group.
\end{itemize}
\end{definition}

\begin{remark}\label{remark-splitting-cliff-alg}
An even dimensional quadratic form $q$ has trivial discriminant and trivial Clifford invariant if and only if
$\cC_0(q)=M_r(k) \times M_r(k)$, where $r:=2^{{\mathrm{dim}}(q)-2}$, and $M_r(k)$ denotes the algebra of $r \times r$ matrices over $k$;
see the chart at \cite[page~111]{lam-book}. In particular, $\cC_0$ is Morita equivalent
to $k \times k$.
\end{remark}
As explained in \cite[\S II.1]{lam-book}, the isometry classes of anisotropic quadratic forms over $k$ from the {\em Witt ring} $W(k)$, whose sum (resp. product) is induced by
the orthogonal sum (resp. tensor product) of quadratic forms; see Definition~\ref{def:quadratic}(iii). The classes of the even dimensional anisotropic quadratic forms give rise to the so called {\em fundamental ideal} $I(k) \subset W(k)$; see \cite[\S II]{lam-book}.

If $q$ is a quadratic form whose isometry class lies in $I^3(k)$, then $q$ is anisotropic and has trivial discriminant and trivial Clifford invariant; see \cite[Cor. 3.4]{lam-book}. As proved in \cite[Thm. 6.11]{lam-book}, the converse is also true. Hence, we deduce that a non-singular quadratic form $q$ satisfies the assumptions
of Proposition \ref{prop:counter-example-quadric} if and only if its isometry class belongs to $I^3(k)$. In particular, there
is no such quadratic forms if $I^3(k)= 0$ (e.g. if $k$ is algebraically closed or finite; see \cite[\S XI.6]{lam-book}).

\begin{example}{($3$-fold Pfister forms)}\label{ex:3-fold}
In order to describe quadratic forms in the powers of the fundamental ideal $I(k)$, one considers {\em Pfister forms}.
The isometry class of the 2-dimensional quadratic form $x^2 + ay^2$ is
denoted by $\langle 1, a \rangle$ and called a \it 1-fold Pfister form. \rm  
An \it $n$-fold Pfister form \rm is the isometry class of a $2n$-dimensional
quadratic form $\langle 1,a_1 \rangle \otimes \ldots \otimes \langle 1,a_n \rangle$.
The key property of Pfister forms is that whenever $k$ is a function field, the ideal $I^n(k)$
is additively generated by the $n$-fold Pfister forms; see \cite[\S X.1, Prop. 1.2]{lam-book}.
Hence, whenever $k$ is a function field, $3$-fold Pfister forms satisfy assumptions of Proposition \ref{prop:counter-example-quadric}
\end{example}

\subsection*{Proof of Proposition~\ref{prop:counter-example-quadric}}
The fact that the Chow motive $M(Q_q)_\bbZ$ is {\em not} of Lefschetz $\bbZ$-type was proved in  \cite{rost-motive};
see also \cite[\S XVII]{elman-karpenko-merku}.
In what concerns $U(\perf_\dg(Q_q))_\bbZ$, recall from \cite{auel-bernar-bologn,kuznetquadrics} that we have the following semi-orthogonal decomposition 
$$ \perf(Q_q) = \langle \perf(\cC_0(q)), \cO(-d+1), \ldots, \cO\rangle\,.$$
As proved in \cite[\S5]{MT}, semi-orthogonal decompositions become direct sums
in the category of noncommutative Chow motives. Since $\perf_\dg(\cC_0(q))$
is Morita equivalent to $\underline{\cC_0(q)}$ one then obtains the following motivic decomposition
\begin{equation}\label{eq:iso1}
U(\perf_\dg(Q_q))_\bbZ \simeq U(\underline{\cC_0(q)})_\bbZ \oplus {\bf 1}^{\oplus n}\,.
\end{equation}
Using the above Remark~\ref{remark-splitting-cliff-alg} one has moreover
\begin{equation}\label{eq:iso2}
U(\underline{\cC_0(q)})_\bbZ \oplus {\bf 1}^{\oplus n} \simeq {\bf 1}^{\oplus 2} \oplus {\bf 1}^{\oplus n}\,.
\end{equation} 
By combining \eqref{eq:iso1}-\eqref{eq:iso2} one concludes that $U(\perf_\dg(Q_q))_\bbZ$ is of unit $\bbZ$-type and so the proof is finished.

\section{Proof of Theorem~\ref{thm:main2}}
Let $n \geq 1$. Following \cite[page~498]{Soule}, let us denote by $\cS_n$ the category of those abelian groups $G$ which verify the following two conditions:
\begin{itemize}
\item[(i)] there exist an integer $m$ such that $mg=0$ for all $g \in G$.
\item[(ii)] if $p$ is a prime factor of $m$ then $p=2$ or $p <n$.
\end{itemize}
As explained in {\em loc. cit.}, $\cS_n$ is a Serre subcategory of the category of all abelian groups. We start with the following ``arithmetic'' result:
\begin{lemma}\label{lem:arithmetic}
Given any two abelian groups $G$ and $H$, the following holds:
\begin{itemize}
\item[(i)] Assume that $G=H$ modulo $\cS_1$ or modulo $\cS_2$. Then, $G_{\bbZ[1/2]}\simeq H_{\bbZ[1/2]}$.
\item[(ii)] Assume that $G=H$ modulo $\cS_n$ with $n \geq 3$. Then, $G_{\bbZ[1/(n-1)!]}\simeq H_{\bbZ[1/(n-1)!]}$. 
\end{itemize}
\end{lemma}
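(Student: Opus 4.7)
My plan is to show that the Serre subcategory $\cS_n$ collapses after a suitable localization of $\bbZ$, and that this localization is the one stated in each item. The approach rests on two standard facts: first, that a bounded $P$-torsion abelian group becomes zero after inverting the primes in $P$; second, that an exact functor killing a Serre subcategory factors through the Serre quotient, so any isomorphism modulo $\cS_n$ descends to an honest isomorphism after localization.

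The first step is to identify the set $P_n$ of primes appearing as torsion in $\cS_n$. A group $G \in \cS_n$ is annihilated by some integer $m$ whose prime factors satisfy $p = 2$ or $p < n$. For $n = 1, 2$, the condition $p < n$ excludes every prime, so $P_1 = P_2 = \{2\}$. For $n \geq 3$, the allowed prime factors are exactly the primes $p \leq n-1$, so $P_n = \{p \text{ prime} : p \leq n - 1\}$, which is precisely the set of prime divisors of $(n-1)!$. In particular, inverting $2$ suffices to kill $\cS_1$ and $\cS_2$, while inverting $(n-1)!$ suffices to kill $\cS_n$ for $n \geq 3$.

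The second step is to observe that for any set $P$ of primes, the localization functor $(-) \otimes_{\bbZ} \bbZ[1/P]: \mathrm{Ab} \to \mathrm{Ab}$ is exact and sends every bounded $P$-torsion group to zero. In particular, for $P = P_n$, the composition $\mathrm{Ab} \to \mathrm{Ab}/\cS_n$ followed by any functor out of the quotient agrees with $(-)\otimes \bbZ[1/P_n]$ precisely when that functor kills $\cS_n$; by the universal property of Serre quotients, $(-)\otimes \bbZ[1/P_n]$ factors uniquely through $\mathrm{Ab}/\cS_n$.

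Finally, if $G = H$ modulo $\cS_n$ means that $G$ and $H$ are isomorphic objects in the Serre quotient $\mathrm{Ab}/\cS_n$, then applying the induced functor to the localization gives $G_{\bbZ[1/P_n]} \simeq H_{\bbZ[1/P_n]}$. Specializing to $n \in \{1,2\}$ yields item (i), and to $n \geq 3$ yields item (ii). I do not expect any real obstacle here; the only mildly delicate point is the arithmetic identity $\bbZ[1/(n-1)!] = \bbZ[1/p : p \leq n-1 \text{ prime}]$, which is elementary.
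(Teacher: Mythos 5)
Your proof is correct and takes essentially the same approach as the paper: both identify the primes that can occur in the bounded torsion of objects of $\cS_n$ (namely $\{2\}$ for $n\leq 2$ and $\{p\,\colon p\leq n-1\}$ for $n\geq 3$) and observe that inverting them, \ie inverting $2$ or $(n-1)!$, kills those groups. The paper's proof is terser, stopping at ``inverting $(n-1)!$ inverts $m$''; you additionally spell out the exactness/Serre-quotient mechanism that turns an isomorphism modulo $\cS_n$ into an honest isomorphism after localization, which the paper leaves implicit.
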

\begin{proof}
In the cases where $n=1, 2$ the integer $m$ is always a power of $2$. Hence, if one inverts $2$ one inverts also $m$. In the remaining cases the prime factors of $m$ are always $\leq n-1$. Hence, if one inverts $(n-1)!$ one inverts all the prime factors of $m$ and consequently $m$ itself.
\end{proof}
\begin{proposition}\label{prop:key-Soule}
Let $X$ be an irreducible smooth projective $k$-scheme of dimension $d_X$. Under the assumption $\bbZ\subseteq R$, the following holds:
\begin{itemize}
\item[(i)] The Todd class $\mathrm{Td}(X)$ is invertible in the Chow ring $\oplus_{i=0}^{d_X} CH^i(X)_{R[1/(2d_X)!]}$.
\item[(ii)] The Chern character induces an isomorphism 
\begin{eqnarray}\label{eq:isom-Chern}
K_0(X)_{R[1/(2d_X)!]} \stackrel{\sim}{\too} \oplus_{i=0}^{d_X} CH^i(X)_{R[1/(2d_X)!]} && \alpha \mapsto ch(\alpha)\,.
\end{eqnarray}
\end{itemize}
\end{proposition}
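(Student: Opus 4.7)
The plan for part~(i) is to exploit the nilpotence of strictly positive-codimension classes in the Chow ring of a $d_X$-dimensional smooth projective variety. Using the standard expansion $\mathrm{Td}(X)=\sum_{m=0}^{d_X}\mathfrak{D}_m/T_m$ recalled in the proof of Proposition~\ref{prop:aux-key3} (with $T_m=\prod_p p^{[m/(p-1)]}$ and $\mathfrak{D}_m$ a polynomial with integer coefficients in the Chern classes of the tangent bundle of $X$), I would first check that every denominator $T_m$ with $0\le m\le d_X$ is a unit in $R[1/(2d_X)!]$; this is immediate from Lemma~\ref{lem:aux-arithmetic}(ii) applied with $d=d_X$. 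Hence $\mathrm{Td}(X)$ actually belongs to $\bigoplus_{i=0}^{d_X}CH^i(X)_{R[1/(2d_X)!]}$, and its degree-zero component equals the unit $1$ of the Chow ring. Writing $\mathrm{Td}(X)=1+\tau$ with $\tau\in \bigoplus_{i\ge 1}CH^i(X)_{R[1/(2d_X)!]}$, the identity $\tau^{d_X+1}=0$ (which holds for purely dimensional reasons) produces the explicit two-sided inverse $\sum_{j=0}^{d_X}(-\tau)^j$.

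The plan for part~(ii) is to invoke the integral refinement of Grothendieck--Riemann--Roch due to Soul\'e. Concretely, in \cite{Soule} it is proved that for an irreducible smooth projective variety $X$ of dimension $d_X$, the $\gamma$-filtration $F^\bullet_\gamma K_0(X)$ has length $d_X+1$ and the Chern classes induce, on each graded piece, a map $\mathrm{Gr}^i_\gamma K_0(X)\to CH^i(X)$ whose kernel and cokernel are annihilated by an integer whose prime factors lie in the range permitted by the Serre subcategory $\cS_n$ (recalled just before Lemma~\ref{lem:arithmetic}), for some $n\le 2d_X+1$. Combining all graded pieces, the total Chern character itself has kernel and cokernel in $\cS_{2d_X+1}$. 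Since such classes are killed by inverting $(2d_X)!$ (Lemma~\ref{lem:arithmetic}(ii) applied with $n-1=2d_X$), after tensoring with $R[1/(2d_X)!]$ the map~\eqref{eq:isom-Chern} becomes an isomorphism.

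I expect the main obstacle to be pinning down the precise Serre class in which the kernel and cokernel of Soul\'e's Chern character actually land, and then verifying that the exponent $(2d_X)!$ (rather than some potentially larger factorial) suffices. This reduces to tracing, via the Newton identities between Chern classes and power sums, the denominators that appear when formally inverting $ch$ on each graded piece of the $\gamma$-filtration: they involve only factorials up to $d_X!$ together with the $T_m$'s from part~(i), all of which divide $(2d_X)!$. Once this book-keeping is in place, the remainder of the argument is the purely formal application of the localisation Lemma~\ref{lem:arithmetic}(ii).
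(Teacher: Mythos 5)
Your proof of part~(i) is essentially identical to the paper's: in both cases one observes that all the denominators $T_m$ for $m\le d_X$ become units after inverting $(2d_X)!$ (by Lemma~\ref{lem:aux-arithmetic}(ii)), that $\mathfrak{D}_0/T_0=1$, and that the higher-degree part is nilpotent, so a geometric-series inverse exists. You merely spell out the nilpotence step that the paper leaves implicit; this is correct.

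For part~(ii) your strategy is the right one (invoke Soul\'e's integral refinement of Riemann--Roch and the Serre subcategories $\cS_n$), but the argument as written has a genuine gap that you yourself flag. You phrase the input from \cite{Soule} in terms of the $\gamma$-filtration and assert that the kernel and cokernel of the total Chern character lie in $\cS_{2d_X+1}$ ``for some $n\le 2d_X+1$'', then say that pinning down the precise Serre class and verifying that $(2d_X)!$ suffices ``is the main obstacle'' and would require book-keeping via Newton's identities. That book-keeping is exactly the content that must be supplied, and you do not supply it; as it stands the bound $\cS_{2d_X+1}$ is an unverified expectation. The paper sidesteps this entirely by quoting the precise form of Soul\'e's theorem at page~52 of \cite{Soule}: the Gersten--Quillen (codimension) spectral sequence gives $K_0(X)_{\bbZ}=\oplus_{i=0}^{d_X}E_2^{i,-i}(X)$ \emph{modulo} $\cS_{d_X}$, and Quillen's identification $E_2^{i,-i}(X)\simeq CH^i(X)_{\bbZ}$ (\cite[Prop.~5.14]{Quillen}) turns the right-hand side into the Chow groups. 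Feeding $\cS_{d_X}$ into Lemma~\ref{lem:arithmetic} already yields an isomorphism after inverting $(d_X-1)!$ (or $2$ when $d_X\le 2$), which is even sharper than what the proposition asserts, and tensoring with $R[1/(2d_X)!]$ finishes. So the route through the Gersten--Quillen spectral sequence, with the exact Serre class $\cS_{d_X}$ already computed by Soul\'e, replaces the denominator-chasing you anticipated and closes the gap.
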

\begin{proof}
Recall from the proof of Proposition~\ref{prop:aux-key3} that the Todd class $\mathrm{Td}(X) \in \oplus_{i=0}^{d_X} CH^i(X)_{R[1/d_X!]}$ is given by $\sum_{m \geq 0} \frac{\mathfrak{D}_m}{T_m}$, where $\mathfrak{D}_m$ is a polynomial with integral coefficients in the Chern classes. Moreover, $\mathfrak{D}_m=0$ for $m > d_X$. Since by definition $\mathfrak{D}_0=T_0=1$ one then observes that $\mathrm{Td}(X)$ is invertible in $\oplus_{i=0}^{d_X}CH^i(X)_{R[1/d_X!]}$ and consequently in $\oplus_{i=0}^{d_X}CH^i(X)_{R[1/(2d_X)!]}$. This proves item (i).

Let us now prove item (ii). The case $d_X=0$ is clear and so we assume that $d\geq 1$. As proved  at \cite[page~52]{Soule}, the Chern character combined with the Gersten-Quillen spectral sequence give rise to the following equality
\begin{eqnarray*}
K_0(X)_\bbZ = \oplus_{i=0}^{d_X} E_2^{i,-i}(X) && \mathrm{modulo}\,\, \cS_{d_X}\,.
\end{eqnarray*}
Moreover, as proved at \cite[Prop.~5.14]{Quillen}, we have the identifications
\begin{eqnarray*}
E_2^{i,-i}(X) \simeq CH^i(X)_\bbZ && 0 \leq i \leq d_X\,.
\end{eqnarray*}
Using Lemma~\ref{lem:arithmetic} one obtains then the following isomorphisms:\begin{eqnarray}
K_0(X)_{\bbZ[1/2]} \simeq \oplus _{i=0}^{d_X} CH^i(X)_{\bbZ[1/2]} && d_X=1,2 \label{eq:iso-11}\\
K_0(X)_{\bbZ[1/(d_X-1)!]} \simeq \oplus _{i=0}^{d_X} CH^i(X)_{\bbZ[1/(d_X-1)!]} && d_X\geq 3 \label{eq:iso-22}\,.
\end{eqnarray}
The searched isomorphisms \eqref{eq:isom-Chern} can now be obtained by tensoring \eqref{eq:iso-11}-\eqref{eq:iso-22} with $R[1/(2d_X)!]$.
\end{proof}
Let $X_1, \ldots, X_n$ be a finite family of irreducible smooth projective $k$-schemes of dimensions $d_1, \ldots, d_n$. Recall from \S\ref{sec:proof-main1} the construction of the functor 
\begin{equation*}
\Psi^\oplus: (X_1, \ldots, X_n)^\oplus_R \too \Chow(k)_{R[1/(2d)!]}/_{\!\!-\otimes R(1)}\,,
\end{equation*}
where $d:=\mathrm{max}\{d_1,\ldots, d_n\}$.
\begin{proposition}\label{prop:induced}
The induced $R[1/(2d)!]$-linear functor
\begin{equation*}
\Psi^\oplus: (X_1,\ldots, X_n)^\oplus_{R[1/(2d)!]} \too \Chow(k)_{R[1/(2d)!]}/_{\!\!-\otimes R(1)}
\end{equation*}
is fully faithful.
\end{proposition}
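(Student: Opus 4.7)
The plan is to show that $\Psi^\oplus$ is fully faithful by first checking the claim on Hom modules between the generating objects $M_0(X_r)_{R[1/(2d)!]}$ and $M_0(X_s)_{R[1/(2d)!]}$ for all pairs of indices $r,s$; fully faithfulness on the additive closure $(X_1,\ldots,X_n)^\oplus_{R[1/(2d)!]}$ is then a formal consequence of additivity, since Hom modules between finite direct sums are described matrix-wise in terms of Hom modules between summands. On each such pair, the map that $\Psi^\oplus$ induces is
$$K_0(X_r\times X_s)_{R[1/(2d)!]} \longrightarrow \bigoplus_{i=0}^{d_r+d_s} CH^i(X_r\times X_s)_{R[1/(2d)!]}, \qquad \alpha \longmapsto \mathrm{ch}(\alpha)\cdot \pi^\ast_{X_s}(\mathrm{Td}(X_s));$$
by the identification of Hom modules in the orbit category recalled in the proof of Proposition~\ref{prop:functor}, the right-hand side coincides with $\Hom_{\Chow(k)_{R[1/(2d)!]}/_{\!\!-\otimes R(1)}}(\pi(M(X_r)_{R[1/(2d)!]}),\pi(M(X_s)_{R[1/(2d)!]}))$. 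The task thus reduces to showing this particular map is an $R[1/(2d)!]$-linear isomorphism.

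I would then factor the map above as the Chern character followed by multiplication by $\pi^\ast_{X_s}(\mathrm{Td}(X_s))$, and verify each factor is bijective. For the Chern character on $X_r\times X_s$: Proposition~\ref{prop:key-Soule}(ii), read literally, would require inverting $(2(d_r+d_s))!$, which can exceed $(2d)!$. However, the proof of that proposition invokes only the Soul\'e--Quillen identification $K_0 = \bigoplus CH^i$ modulo $\cS_{d_r+d_s}$ together with Lemma~\ref{lem:arithmetic}, and thereby establishes an isomorphism after inverting merely $(d_r+d_s-1)!$ (or $2$ when $d_r+d_s \leq 2$). Since $d_r + d_s \leq 2d$, this sharper bound is a divisor of $(2d)!$, and so the Chern character becomes an isomorphism after tensoring with $R[1/(2d)!]$.

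For the second factor: Proposition~\ref{prop:key-Soule}(i) shows that $\mathrm{Td}(X_s)$ is a unit in $\bigoplus_i CH^i(X_s)_{R[1/(2d_s)!]}$, and because $d_s \leq d$ we have $(2d_s)! \mid (2d)!$, so $\mathrm{Td}(X_s)$ remains a unit after extending scalars to $R[1/(2d)!]$. As $\pi^\ast_{X_s}$ is a ring homomorphism, $\pi^\ast_{X_s}(\mathrm{Td}(X_s))$ is then a unit in $\bigoplus_i CH^i(X_r\times X_s)_{R[1/(2d)!]}$, so multiplication by it is an $R[1/(2d)!]$-linear automorphism, and bijectivity follows. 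The one delicate point in this plan is exactly the arithmetic bookkeeping in the Chern-character step: one must open up the proof of Proposition~\ref{prop:key-Soule}(ii) to extract the sharp inversion bound $(d_r+d_s-1)!$, rather than naively quote the looser statement $(2(d_r+d_s))!$, which a priori need not divide $(2d)!$.
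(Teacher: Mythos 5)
Your proof follows the same route as the paper's: reduce to Hom groups between the generators $M_0(X_r)$ and $M_0(X_s)$, identify the induced map with $\alpha \mapsto \mathrm{ch}(\alpha)\cdot\pi^\ast_{X_s}(\mathrm{Td}(X_s))$, factor it as the Chern character followed by multiplication by an invertible element, and invoke Proposition~\ref{prop:key-Soule}. You have, however, also spotted and repaired a genuine imprecision in the published argument: the paper concludes the Chern-character step by citing Proposition~\ref{prop:key-Soule}(ii) applied to $X = X_r\times X_s$, but as literally stated that proposition only asserts an isomorphism after inverting $(2(d_r+d_s))!$, and since $d_r+d_s$ can be as large as $2d$ this need not divide $(2d)!$. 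Your fix --- opening up the proof of Proposition~\ref{prop:key-Soule}(ii) to extract the sharper inversion threshold $(d_X-1)!$ (or $2$ when $d_X\leq 2$) delivered by Soul\'e's $K_0 \equiv \oplus_i CH^i$ modulo $\cS_{d_X}$ together with Lemma~\ref{lem:arithmetic}, and observing that $(d_r+d_s-1)! \mid (2d)!$ because $d_r+d_s-1 < 2d$ --- is exactly what is needed to make the argument close over $R[1/(2d)!]$, and is evidently what the authors intended. Your argument is therefore correct and, on this point, more careful than the proof as written.
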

\begin{proof}
Let $X_r$ and $X_s$ be any two $k$-schemes in $\{X_1, \ldots, X_n\}$. From the construction of $\Psi^\oplus$ it is clear that it suffices to show that the homomorphism 
\begin{equation}\label{eq:induced-homo}
\begin{array}{ccc}
K_0(X_r \times X_s)_{R[1/(2d)!]} &\too& \oplus_{i=0}^{d_r + d_s} CH^i(X_r \times X_s)_{R[1/(2d)!]}\\
\alpha &\mapsto& ch(\alpha) \cdot \pi^\ast_{X_s}(\mathrm{Td}(X_s))
\end{array}
\end{equation}
is an isomorphism. Thanks to Proposition~\ref{prop:key-Soule}(i) (applied to $X=X_s$) the Todd class $\mathrm{Td}(X_s)$ is an invertible element of $\oplus_{i=0}^{d_s}CH^i(X_s)_{R[1/(2d_s)!]}$ and hence of the Chow ring $\oplus_{i=0}^{d_s}CH^i(X_s)_{R[1/(2d)!]}$. Moreover, since
$$ \pi^\ast_{X_s}: \oplus_{i=0}^{d_s}CH^i(X_s)_{R[1/(2d)!]} \too \oplus_{i=0}^{d_r + d_s} CH^i(X_r \times X_s)_{R[1/(2d)!]}$$
is a ring homomorphism we conclude that $\pi^\ast_{X_s}(\mathrm{Td}(X_s))$ is an invertible element of $\oplus_{i=0}^{d_r + d_s} CH^i(X_r \times X_s)_{R[1/(2d)!]}$. Therefore, in order to prove that \eqref{eq:induced-homo} is an isomorphism it suffices to show that the induced Chern character homomorphism 
\begin{eqnarray*}
&K_0(X_r \times X_s)_{R[1/(2d)!]} \too \oplus_{i=0}^{d_r + d_s}CH^i(X_r \times X_s)_{R[1/(2d)!]} & \alpha \mapsto ch(\alpha)
\end{eqnarray*}
is an isomorphism. This follows now from Proposition~\ref{prop:key-Soule}(ii) above applied to $X=X_r \times X_s$.
\end{proof}

We now have all the ingredients needed for the conclusion of the proof of Theorem~\ref{thm:main2}. Recall that by hypothesis $X$ is an irreducible smooth projective $k$-scheme of dimension $d$. By combing the commutativity of diagram \eqref{eq:diagramK0} with the fully faithfulness of the functor $\theta$ one obtains an $R[1/(2d)!]$-algebra isomorphism 
\begin{equation*}
\End(U(\perf_\dg(X))_{R[1/(2d)!]}) \simeq \End(M_0(X)_{R[1/(2d)!]})\,.
\end{equation*}
Thanks to the fully faithfulness of the functor $\Psi^\oplus$ of Proposition~\ref{prop:induced} (applied to the category $(X)^\oplus_{R[1/(2d)!]}$) and the commutativity of diagram \eqref{eq:diagram-first}, one has moreover
\begin{equation*}
\End(M_0(X)_{R[1/(2d)!]}) \simeq \End(\Psi^\oplus(M_0(X)_{R[1/(2d)!]}))\simeq \End(\pi(M(X)_{R[1/(2d)!]}))\,.
\end{equation*}
Now, recall that by construction the projection functor 
$$ \pi:\Chow(k)_{R[1/(2d)!]} \too \Chow(k)_{R[1/(2d)!]}/_{\!\!-\otimes R(1)}$$
is faithful. Consequently, one obtains the following inclusion of $R[1/(2d)!]$-algebras
\begin{equation*}
\End(M(X)_{R[1/(2d)!]}) \hookrightarrow \End(U(\perf_\dg(X))_{R[1/(2d)!]})\,.
\end{equation*}
This automatically gives rise to the searched implication \eqref{eq:implication-indecomp}.
\section{Proof of Proposition~\ref{prop:main3}}
Item (ii) was proved in \cite[Thm.~2.2.1]{karpenko-brauer-severi} for $M(X)_{\bbZ}$.
The same result holds for $M(X)_{\bbZ/p\bbZ}$; see \cite[Cor. 2.22]{karpenko-incompressibility}. Let us now show item (i). Recall from \cite{marcellobrauer} that we have the following semi-orthogonal decomposition
$$ \perf(X) = \langle\perf(k), \perf(A), \perf(A^{\otimes 2}), \ldots, \perf(A^{\otimes d-1}) \rangle\,.$$
As proved in \cite[\S5]{MT}, semi-orthogonal decompositions become direct sums in the category of noncommutative motives. Since $\perf_\dg(A^{\otimes i})$ is Morita equivalent to $\underline{A}^{\otimes i}$, one then obtains the following motivic decomposition 
\begin{equation}\label{eq:decomp-SB1}
U(\perf_\dg(X))_R \simeq U(\underline{k})_R \oplus U(\underline{A})_R \oplus U(\underline{A}^{\otimes 2})_R \oplus \ldots \oplus U(\underline{A}^{\otimes d-1})_R\,.
\end{equation}  
Finally, since the functor $U(-)_R$ is symmetric monoidal, \eqref{eq:decomp-SB1} identifies with \eqref{eq:decomposition-SB}. 
\begin{remark}
Item (ii) holds also for $M(X)_{\bbZ/p^n\bbZ}$ and hence on the $p$-adic integers; see \cite[Rmq. 2.3]{declercq}.
\end{remark}

\section{Proof of Theorem \ref{thm:new}}
Note first that, by combining the commutativity of diagram \eqref{eq:diagramK0} with the fully faithfulness of the functor $\theta$, it suffices to prove the implication
\begin{equation}\label{eq:implication-11}
\oplus_{i=1}^n M(X_i)_R(l_i) \simeq \oplus_{j=1}^m M(Y_j)_R(l_j) \Rightarrow \oplus_{i=1}^n M_0(X_i)_R \simeq \oplus_{j=1}^m M_0(Y_j)_R\,.
\end{equation}
As explained in \S\ref{sub:orbit}, the projection functor $\pi: \Chow(k)_R \to \Chow(k)_R/_{-\otimes R(1)}$ is additive and moreover sends $M(X_i)_R(l_i)$ to $\pi(M(X_i)_R)$ (up to isomorphism). Hence, the left-hand side of \eqref{eq:implication-11} gives rise to an isomorphism
\begin{equation}\label{eq:iso-last}
\oplus_{i=1}^n \pi(M(X_i)_R) \simeq \oplus_{j=1}^m \pi(M(Y_j)_R)\,.
\end{equation}
Since by hypothesis $1/(2d)! \in R$, Proposition~\ref{prop:induced} furnish us a fully faithful functor
$$ \Psi^\oplus: (X_1, \ldots, X_n,Y_1, \ldots, Y_m)^\oplus_R \too \Chow(k)_R/_{-\otimes R(1)}\,.$$
Using the commutativity of diagram \eqref{eq:diagram-first}, one observes that 
\begin{eqnarray*}
& \Psi^\oplus(\oplus_{i=1}^n M_0(X_i)_R) \simeq \oplus_{i=1}^n \pi(M(X_i))_R & \Psi^\oplus(\oplus_{j=1}^m M_0(Y_j)_R) \simeq \oplus_{j=1}^m \pi(M(Y_j))_R \,.
\end{eqnarray*}
By combining these isomorphisms with \eqref{eq:iso-last}, one obtains then the right-hand side of \eqref{eq:implication-11}. This achieves the proof.

\medbreak\noindent\textbf{Acknowledgments:} The authors are grateful to Asher Auel, Charles De Clercq, Henri Gillet, Sergey Gorchinskiy, Alexander Merkurjev, Christophe Soul{\'e}, Burt Totaro and Chuck Weibel for useful discussions and e-mail exchanges.

\end{document}